\numberwithin{equation}{section}
\long\def\eatit#1{}
\newtheorem{thm}{Theorem}[section]
\newtheorem{prop}[thm]{Proposition}
\newtheorem{lem}[thm]{Lemma}
\newtheorem{cor}[thm]{Corollary}
\theoremstyle{definition}
\newtheorem{Ex}[thm]{Example}
\newtheorem{Rmk}[thm]{Remark}
\newtheorem{Ques}[thm]{Question}
\newtheorem{Conj}[thm]{Conjecture}
\newcommand{\pr}[1]{{{\bf P}^{#1}}}
\newcommand{\prfend}{\hbox to7pt{\hfil}
\par\vskip-\baselineskip\hbox to\hsize
{\hfil\vbox {\hrule width6pt height6pt}}\vskip\baselineskip}
\newcommand{\OO}{{\mathcal{O}}}
\begin{document}


\title{The resurgence of ideals of points and the containment problem}

\author{Cristiano Bocci \& Brian Harbourne}

\address{Cristiano Bocci\\ 
Dipartimento di Scienze Matematiche e Informatiche "R. Magari"\\
Universit\`a degli Studi di Siena\\
Pian dei mantellini, 44\\
53100 Siena, Italy}
\email{bocci24@unisi.it}

\address{Brian Harbourne\\
Department of Mathematics\\
University of Nebraska\\
Lincoln, NE 68588-0130 USA}
\email{bharbour@math.unl.edu}

\date{June 21, 2009}

\thanks{Acknowledgements: This reseach was partially supported by GNSAGA of INdAM (Italy)
and by the NSA}

\begin{abstract} We relate properties of linear systems on $X$ to the
question of when $I^r$ contains $I^{(m)}$ in the case that
$I$ is the homogeneous ideal of a finite set of distinct points $p_1,\ldots,p_n\in\pr 2$,
where $X$ is the surface obtained by blowing up the points. 
We obtain complete answers for when $I^r$ contains $I^{(m)}$ when
the points $p_i$ lie on a smooth conic, or when the points are general and $n\le 9$.
\end{abstract}

\subjclass{Primary: 14C20, 13C05; Secondary: 14N05, 14H20, 41A05}
\keywords{fat points, symbolic powers, normal generation, projective space}

\maketitle


\ \vskip-.5in
\ 

\section{Introduction}\label{intro}

Let $I$ be a homogeneous ideal in a polynomial ring $k[x_0,\ldots,x_N]=k[\pr N]=R$
over an algebraically closed field $k$ of arbitrary characteristic.
Given a homogeneous ideal $I\subseteq R$, the $m$th {\it symbolic power}
of $I$ is the ideal 
$$I^{(m)}=R\cap(\cap_{P\in{\rm Ass}(I)} (I^mR_P)).$$
For an ideal of the kind we will mostly be interested in here,
i.e., an ideal of the form $I=\cap_i (I(p_i)^{m_i})$ where
$p_1,\ldots,p_n$ are distinct points of $\pr N$, $I(p_i)$ is the ideal 
generated by all forms vanishing at $p_i$ and each $m_i$
is a non-negative integer, $I^{(m)}$ turns out to be $\cap_i (I(p_i)^{mm_i})$. 
If $I^m$ is the usual power, then there is clearly a containment $I^m\subseteq I^{(m)}$
and indeed, for $0\neq I\subsetneq R$, $I^r\subseteq I^{(m)}$ holds if and only if $r\ge m$
\cite[Lemma 8.1.4]{refPSC}.
A much more difficult problem is to determine when there are containments
of the form $I^{(m)}\subseteq I^r$.
The results of \cite{refELS} and \cite{refHH1} show that
$I^{(m)}\subseteq I^r$ holds whenever $m\ge Nr$. 
The second author has proposed the following conjecture \cite[Conjecture 8.4.2]{refPSC}:

\begin{Conj} Let $I\subseteq k[\pr N]$ be a homogeneous ideal.
Then $I^{(m)}\subseteq I^r$ if $m\ge rN-(N-1)$.
\end{Conj}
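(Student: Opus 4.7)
My plan is to build on the known bound $I^{(m)}\subseteq I^r$ for $m\ge Nr$, established in \cite{refELS} and \cite{refHH1}, and attempt to save the additional $N-1$ in the exponent. Because the general case of an arbitrary homogeneous ideal seems out of reach all at once, I would proceed in stages: first reduce to radical ideals of reduced zero-dimensional schemes by standard deformation and specialization arguments, then specialize to $N=2$ where the blowup geometry is most transparent, and only attempt the general homogeneous ideal once the point case is well understood.

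For a radical ideal $I$ of distinct points $p_1,\dots,p_n\in\pr 2$, symbolic powers have a clean geometric meaning: the degree-$d$ piece of $I^{(m)}$ consists of plane curves of degree $d$ vanishing to order at least $m$ at every $p_i$. Passing to the blowup $X\to\pr 2$ at these points with exceptional divisors $E_i$, this becomes $H^0(X,dL-m\sum_i E_i)$, where $L$ is the pullback of the hyperplane class. The containment $I^{(m)}\subseteq I^r$ then amounts to the assertion that each such section in sufficiently high degree factors as a sum of $r$-fold products of sections of the line bundle associated to $I$. I would attempt to establish this through a combination of Castelnuovo--Mumford regularity estimates, normal-generation/higher-syzygy statements for line bundles on $X$, and Kawamata--Viehweg vanishing applied to $r$-fold sums involving the divisor $-m\sum_i E_i$.

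The main obstacle, which I expect to dominate the work, is producing the extra unit of slack beyond what multiplier-ideal methods already yield. The Ein--Lazarsfeld--Smith argument passes through the asymptotic multiplier ideal identity $\mathcal{J}(I^{Nr})\subseteq I^r$ combined with subadditivity, and a naive attempt to replace $Nr$ by $Nr-(N-1)$ in subadditivity fails; an analogous obstruction appears in the tight-closure proof of \cite{refHH1}. Additional geometric input therefore seems unavoidable, and for the special configurations treated later in the present paper (points on a smooth conic, or at most nine general points) this input is supplied by the del Pezzo or rational-surface structure of $X$ together with an explicit description of its effective cone. For the general conjecture, however, no such rigidity is at hand; I would expect progress in the case of many general points in $\pr 2$ to be intertwined with the SHGH conjecture on dimensions of linear systems through general fat points, and it is there that I anticipate the deepest difficulties.
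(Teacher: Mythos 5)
The statement you are asked to prove is a conjecture, not a theorem: the paper records it as \cite[Conjecture 8.4.2]{refPSC} and does not prove it, only citing verifications in special cases (generic points in $\pr 2$, radical ideals of points in positive characteristic when $r$ is a power of the characteristic, monomial ideals) and then, in the body of the paper, solving the containment problem completely for points on a smooth conic and for at most nine general points of $\pr 2$. Your proposal is likewise not a proof: it is a research programme, and it explicitly concedes the decisive point. The entire content of the conjecture is the improvement from the known bound $m\ge Nr$ of \cite{refELS} and \cite{refHH1} to $m\ge rN-(N-1)$, and your own text states that the naive modification of the subadditivity/multiplier-ideal argument fails and that ``no such rigidity is at hand'' in general. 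Nothing in the proposal supplies the missing argument, so there is no proof to evaluate for correctness; the gap is the theorem itself.

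Two further cautions on the reductions you treat as routine. First, the reduction from arbitrary homogeneous ideals to radical ideals of reduced zero-dimensional schemes ``by standard deformation and specialization arguments'' is not standard and is not known: containments $I^{(m)}\subseteq I^r$ do not in general behave well under specialization, since both symbolic and ordinary powers can jump. Second, even in the zero-dimensional radical case in $\pr 2$, the paper's positive results do not proceed by a uniform vanishing-theorem argument of the kind you sketch; they rest on case-specific structure (normal generation results such as Lemma \ref{ng} and Proposition \ref{UnifPlusNefLem} for points on a conic, and explicit knowledge of $\alpha(I^{(m)})$, ${\rm reg}(I)$, and the effective cone for $n\le 9$ general points, via Corollary \ref{a=rCor} and its refinements). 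Your instinct that the general case of many points in $\pr 2$ is entangled with the SHGH conjecture is consistent with the paper's discussion in the introduction, but that observation is a statement of difficulty, not a step toward a proof.
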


This conjecture has been verified in a range of examples (such as when $I$
is the radical ideal of a finite set of generic points in $\pr2$ \cite{refBH}, or 
when $I$ is a radical ideal defining a finite set of points
in $\pr N$ and $r$ is a power of the characteristic 
when ${\rm char}(k)>0$ \cite[Example 8.4.4]{refPSC},
or when $I$ is a monomial ideal \cite[Example 8.4.5]{refPSC}).
Even if this conjecture is true, there is still the question
of determining for any given ideal $I$ and each $r$ what the least $m$ is for which
$I^{(m)}\subseteq I^r$ holds. An asymptotic version of this problem
is to determine the least real number $\rho(I)$, called the {\it resurgence} 
of $I$ \cite{refBH}, such that $m>r\rho(I)$ implies $I^{(m)} \subseteq I^r$. 
Thus the result of \cite{refELS} and \cite{refHH1} shows that $\rho(I)\le N$.
This is optimal in the sense that
for any real number $c<N$ \cite{refBH} constructs an ideal $I$ with
$\rho(I)>c$. 

In this paper we will in a range of cases address both the problem of computing the resurgence
and the problem of finding all $m$ and $r$ such that containment holds,
for ideals defining fat point subschemes of $\pr2$, whose
general definition we now recall.
Given distinct points $p_i\in\pr N$ and non-negative integers $m_i$,
we denote by $Z=m_1p_1+\cdots+m_np_n\subset \pr N$
the subscheme (known as a {\it fat point\/} subscheme)
defined by $I(Z)=\cap_i I(p_i)^{m_i}$, where $I(p_i)$
is the ideal generated by all forms which vanish at $p_i$. 

We also recall various additional definitions we will need.
Given any homogeneous ideal $(0)\neq I\subsetneq k[\pr N]$, 
following \cite{refBH} we use the following notation:
\begin{itemize}
\item[$\bullet$] $\alpha(I)$ is the degree of a homogeneous generator of $I$ of least degree
(equivalently, it is the $M$-adic order of $I$, i.e., the largest $t$ such that $M^t$ contains $I$, 
where $M$ is the maximal homogeneous
ideal of $k[\pr N]$),
\item[$\bullet$] $\gamma(I)=\lim_{m\to\infty}\alpha(I^{(m)})/m$, 
\item[$\bullet$] $\rho(I)$ is the supremum of all
ratios $m/r$ such that $I^{(m)}\not\subseteq I^r$,
\item[$\bullet$] ${\rm reg}(I)$ is the Castelnuovo-Mumford regularity; if $I$
is the ideal of a fat point subscheme, it is the least degree $t>0$ such that
$\dim(R/I)_t=\dim(R/I)_{t-1}$, where $\dim (R/I)_i=\dim R_i - \dim I_i$ and 
$I_i$ (resp.\ $R_i$) is the vector space span in $I$ (resp.\ $R$) 
of the forms of degree $i$ in $I$ (resp.\ $R$).
\end{itemize}

The quantities above are related. We note that by \cite{refBH} we have
$\rho(I)\ge \alpha(I)/\gamma(I)\ge  \alpha(I^m)/\alpha(I^{(m)})$ for all $m\ge 1$.
(Note that $\gamma(I)\ge1$ since $I\neq k[\pr N]$, by \cite[Lemma 8.2.2]{refPSC}.
For example, if $Z\neq 0$ is a fat point subscheme, then 
$Z$ contains some point $p$ as a subscheme.
Thus $\alpha(I(mp))\le \alpha(I(mZ))$, so using the easy fact that $m=\alpha(I(mp))$  
we see $\gamma(I(Z))\ge 1$. It is also true that $\rho(I(Z))\ge 1$ when $Z\ne 0$. To see this,
note that $\alpha(I(mZ))\le m\alpha(I(Z))$ for all $m\ge 1$, so $1\le \alpha(I(Z))/\gamma(I(Z))$;
now use $\alpha(I(Z))/\gamma(I(Z))\le \rho(I(Z))$ \cite{refBH}.)
Of course, $\alpha(I^m)/\alpha(I^{(m)})$ gives a measure of 
the growth that occurs when an ordinary power $I^m$ is replaced by 
a symbolic power $I^{(m)}$, but it is possible that 
$\alpha(I^m)/\alpha(I^{(m)})=1$ even though $\rho(I)>1$ and
$I^m\subsetneq I^{(m)}$ for all $m>1$ (see Lemma \ref{freepartptsonsmoothconic}
and Theorem \ref{rhoptsonsmoothconic}).
Thus $\rho(I)$ gives an asymptotic measure of additional growth
not detected by $\alpha(I^m)/\alpha(I^{(m)})$.

An additional important relationship was found in \cite{refBH}. If $I\subset k[\pr N]$
is the ideal of a 0-dimensional subscheme, then 
$\alpha(I)/\gamma(I)\le \rho(I)\le {\rm reg}(I)/\gamma(I)$,
and hence when $\alpha(I)={\rm reg}(I)$, we have an exact determination
$\rho(I)=\alpha(I)/\gamma(I)$. In this situation,
as an immediate consequence of  \cite[Lemmas 2.3.2(a) and 2.3.4]{refBH}, 
we also have the following solution for the containment problem:

\begin{cor}\label{a=rCor}
Assume $I\subset k[\pr N]$ is the ideal of a 0-dimensional subscheme of $\pr N$,
and that $\alpha(I)={\rm reg}(I)$. Then $I^{(m)}\subseteq I^r$ if and only if
$\alpha(I^{(m)})\ge r\alpha(I)$.
\end{cor}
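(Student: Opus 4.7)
The corollary has two directions; the forward one is essentially tautological and the reverse draws its weight from the two cited lemmas.

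For the forward direction, I would simply note that $I^{(m)}\subseteq I^r$ gives $\alpha(I^{(m)})\ge \alpha(I^r)$, and the elementary identity $\alpha(I^r)=r\alpha(I)$ then delivers the claim. The identity holds for any homogeneous ideal: an $r$-fold product of minimum-degree generators of $I$ provides a nonzero form of degree $r\alpha(I)$ in $I^r$, while no form of smaller degree can occur by grading.

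For the reverse direction, set $d=\alpha(I)={\rm reg}(I)$ and assume $\alpha(I^{(m)})\ge rd$. The plan is to chain together the two cited lemmas. First, \cite[Lemma 2.3.2(a)]{refBH} exploits the equality $\alpha(I)={\rm reg}(I)$ for a $0$-dimensional scheme to show that $I$ is generated in the single degree $d$ and that this ``linear'' behavior persists under taking powers; in particular ${\rm reg}(I^r)=rd$ matches $\alpha(I^r)$. Second, \cite[Lemma 2.3.4]{refBH} is a containment criterion asserting that, under these regularity conditions, the inclusion $J\subseteq I^r$ holds whenever $\alpha(J)\ge {\rm reg}(I^r)$. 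Applying this criterion to $J=I^{(m)}$, and using ${\rm reg}(I^r)=rd$ together with the hypothesis $\alpha(I^{(m)})\ge rd$, yields $I^{(m)}\subseteq I^r$ as required.

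The main obstacle, were one to prove the corollary without invoking the cited lemmas, is the regularity propagation ${\rm reg}(I^r)=r\,{\rm reg}(I)$ under the hypothesis $\alpha(I)={\rm reg}(I)$. This is a genuine structural fact about powers of ideals with linear generation, and it is precisely the content of \cite[Lemma 2.3.2(a)]{refBH}. Once it is in hand, the matching containment criterion \cite[Lemma 2.3.4]{refBH} converts the problem of verifying containment into a direct comparison of $\alpha$-values with ${\rm reg}(I^r)$, which by construction is exactly the comparison appearing in the hypothesis; the remaining bookkeeping (checking that $(I^{(m)})_t=0$ for $t<rd$ makes the low-degree part of the containment vacuous, and that the single-degree generation of $I^r$ controls the high-degree part) is routine.
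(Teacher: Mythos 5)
Your proposal is correct and follows essentially the same route as the paper, which states the corollary as an immediate consequence of \cite[Lemmas 2.3.2(a) and 2.3.4]{refBH}: the forward direction from $\alpha(I^{(m)})\ge\alpha(I^r)=r\alpha(I)$, and the reverse direction from the criterion that $\alpha(I^{(m)})\ge r\,{\rm reg}(I)$ forces $I^{(m)}\subseteq I^r$, with the hypothesis $\alpha(I)={\rm reg}(I)$ making the two thresholds coincide. Your guess at the precise content of Lemma 2.3.2(a) (regularity propagation for powers) is not quite what that lemma says---it is the $\alpha$-comparison underlying the forward direction---but this does not affect the validity of your argument, since your forward direction is proved from scratch and the reverse direction rests correctly on Lemma 2.3.4.
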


Unfortunately, $\alpha(I)={\rm reg}(I)$ often does not hold, and even when it does
it can be very hard to compute $\alpha(I^{(m)})$ when $m$ is large.
For example, say $I$ is the ideal defining $n$ generic points in
$\pr2$. Then the value of $\gamma(I)$ is not in general known for $n>9$
 (it is conjectured to be $\gamma(I)=\sqrt{n}$ \cite[Section 1.3]{refBH} when $n>9$)
nor is $\alpha(I^{(m)})$ in general known for $n>9$ (by the SHGH Conjecture
of Segre-Harbourne-Gimigliano-Hirschowitz \cite{refS, refH4, refG, refHi}, 
it is conjectured that $\alpha(I^{(m)})$ is the least $t$ such that $\binom{t+2}{2}>n\binom{m+1}{2}$
when $n>9$). For $n\le9$ generic points in $\pr2$, 
$\alpha(I^{(m)})$ has been known for a long time \cite{refC} 
and one can also compute $\gamma(I)$ in these cases. When $n$ is a square
$\gamma(I)$ is known \cite[Section 1.3]{refBH} and recent work has also determined
$\alpha(I^{(m)})$ when $n$ is a square \cite{refCM, refE, refR}, verifying the SHGH Conjecture.

On the other hand, it is easy to specify when $\alpha(I)={\rm reg}(I)$ for the ideal $I$ of $n$ generic
points of $\pr2$: this occurs exactly when $n=\binom{s+1}{2}$ for some $s\ge1$,
in which case $\alpha(I)={\rm reg}(I)=s$.
Thus Corollary \ref{a=rCor} gives a complete explicit solution (i.e.,
$\rho(I)=\alpha(I)/\gamma(I)$ and $I^{(m)}\subseteq I^r$ if and only if
$\alpha(I^{(m)})\ge r\alpha(I)$) to the containment problem for the ideal $I$ of $n$ generic points
in those cases for which $\alpha(I)={\rm reg}(I)$ such that $\gamma(I)$ is known and $\alpha(I^{(m)})$
is known for all $m\ge1$; i.e., 
when $n$ is either $1, 3, 6$ or $n$ is any square which is at the same time a binomial coefficient.
(This happens infinitely often, starting with $s=1, 8, 49, 288,\ldots$ \cite[Section 1.3]{refBH}.)

Here we obtain results for ideals $I$ defining
points $p_1,\dots,p_n\in \pr 2$ in a range of cases for which $\alpha(I)\neq{\rm reg}(I)$,
by applying geometrically relevant properties of linear systems
on the variety $X$ obtained by blowing up the points $p_i$.
Indeed, we completely determine the set of ordered pairs $(m,r)$ for which 
$I^{(m)} \subseteq I^r$ holds in case $I\subset k[\pr 2]$ 
is the ideal of a finite set $p_1,\dots,p_n\in \pr 2$ of points
when either the points are general and $n\le 9$, or the points lie
on an irreducible conic in $\pr 2$ for any $n$. At the same time we determine $\rho(I)$ in these
cases. 

\section{Background}\label{bg}

We now recall or prove results we will need later.

In this section, let $\pi: X\to \pr 2$ be obtained by blowing up distinct points
$p_1,\ldots,p_n\in \pr 2$. Let $L$ be the pullback via $\pi$ to $X$
of a general line in $\pr 2$
and let $E_i$ be the the blow up of $p_i$.
Given the fat point subscheme $Z=m_1p_1+\cdots+m_np_n$,
the ideal $I(Z)$ is homogeneous. Let $I(Z)_t$ be the homogeneous component of $I(Z)$
of degree $t$; i.e., $I(Z)_t$ is the $k$-vector space
span of the forms in $I(Z)$ of degree $t$. Then we have a natural identification of
$I(Z)_t$ with $H^0(X, \OO_X(F_t(Z)))$, where $F_t(Z)$ denotes
$tL-m_1E_1-\cdots-m_nE_n$ (see, for example, \cite[Proposition IV.1.1]{refH7}). 

The linear equivalence 
classes of the divisors $L, E_1,\ldots,E_n$ give an orthogonal basis for the divisor class group
$\hbox{Cl}(X)$ of $X$ such that $-L^2=E_i^2=-1$. With respect to this basis, the
canonical class $K_X$ is $K_X=[-3L+E_1+\cdots+E_n]$.
For simplicity we will suppress the brackets when writing
the class $[aL-m_1E_1-\cdots-m_nE_n]$ of a divisor
$aL-m_1E_1-\cdots-m_nE_n$; in context the meaning will 
always be clear. Also, given a divisor $F$,
when indicating cohomology, we will 
for example often write $H^0(X, F)$ in place of $H^0(X, \mathcal{O}_X(F))$.

We will say that a divisor $F$ is {\it normally generated\/}
if the natural map 
$$H^0(X, F)^{\otimes s}\to H^0(X, sF)$$ 
is surjective for all $s\ge 1$. Note that we do not require that $F$ be ample.
Thus for example, $L-E_1$ is normally generated and nef but not ample.
(We recall that a divisor or divisor class $D$ is {\it nef} if $D\cdot C\ge0$
for every effective divisor $C$, and $D$ is ample if $D^2>0$ and $D\cdot C>0$
for every effective divisor $C$ \cite[Theorem V.1.10]{refHr}.)

Our hypothesis in this section that $Q=2L-E_1-\cdots-E_n$ be the class of an effective divisor
simply means that the points $p_i$ lie on a conic. Thus the results of this section
are useful for analyzing the case of points on conics, where we can hope to get
complete answers. However, when the conic is not irreducible (equivalently, not smooth),
this analysis requires a treatment of subcases (depending on how many points
are on each of the two lines of which the conic is composed). Our main interest in 
this paper is the case of $n\le 9$ general points. Since any $n\le 5$ general
points lie on a smooth conic, we will in the next section
apply the results of this section for points on a smooth conic
(and obtain for free a complete answer for any number of points 
on a smooth conic). However, the results of this section do not require
that the conic be smooth, so with a view to using the results below to analyze the case of
points on reducible conics in the future, we state our results here
assuming only that $Q=2L-E_1-\cdots-E_n$ is the class of an effective divisor.

\begin{lem}\label{ng} Assume $Q=2L-E_1-\cdots-E_n$ is the 
class of an effective divisor
on $X$  (i.e., the points $p_i$ lie on a plane conic, not necessarily smooth). 
If $F$ is nef, then the class of $F$ is the class of
an effective divisor, $h^1(X, F)=0$, $|F|$ is base point free
and $F$ is normally generated.
\end{lem}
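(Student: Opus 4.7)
The plan is to prove the four conclusions in order, leveraging that $-K_X = L + Q$ is effective (the sum of the pullback class $L$ and the assumed class $Q$). Writing $F = aL - \sum_i m_i E_i$, nefness forces $a = F\cdot L \ge 0$ and $m_i = F\cdot E_i \ge 0$. Then $K_X - F = -(a+3)L + \sum_i (m_i+1)E_i$ has strictly negative $L$-coefficient, and since no effective divisor can pair negatively with the nef class $L$, Serre duality gives $h^2(X,F) = h^0(X,K_X - F) = 0$. Combined with $F^2 \ge 0$ (nef on a surface) and $F\cdot(-K_X) = F\cdot L + F\cdot Q \ge 0$ (nef paired with effective), Riemann--Roch yields
\[
h^0(X,F) \ge \chi(X,F) = 1 + \frac{F^2 + F\cdot(-K_X)}{2} \ge 1,
\]
so $F$ is effective.

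For $h^1(X,F) = 0$ and base point freeness of $|F|$, I would induct on $a = F\cdot L$. When $a = 0$, the constraints $m_i \ge 0$ together with $F^2 = -\sum m_i^2 \ge 0$ force $F = 0$. For $a \ge 1$, I would use the exact sequence
\[
0 \to \mathcal{O}_X(F - L) \to \mathcal{O}_X(F) \to \mathcal{O}_L(a) \to 0
\]
for a general pulled-back line $L$. The quotient on $L \cong \pr 1$ is globally generated with vanishing $h^1$, so everything reduces to $h^1(X, F - L) = 0$. The main obstacle is that $F - L$ need not be nef; my remedy is a peeling-off argument, subtracting a minimal effective combination $N$ of curves on which $F - L$ pairs negatively until the residual class $F - L - N$ is nef (such negative curves must be components of $Q$ or exceptional divisors, since the hypothesis that $Q$ is effective controls the negative-curve structure of $X$). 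Induction applied to $F - L - N$ combined with the long exact sequences then yields $h^1(X, F - L) = 0$ and, via the induced surjection $H^0(X, F) \twoheadrightarrow H^0(L, \mathcal{O}_L(a))$, base point freeness of $|F|$ on a general line; the remaining points on exceptional divisors are handled by an analogous restriction argument using the sequences $0 \to \mathcal{O}_X(F - E_i) \to \mathcal{O}_X(F) \to \mathcal{O}_{E_i}(m_i) \to 0$.

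For normal generation, with $|F|$ base point free and $h^1(X, F) = 0$ established, I would pick a general smooth member $C \in |F|$ and combine the exact sequences
\[
0 \to \mathcal{O}_X((s-1)F) \to \mathcal{O}_X(sF) \to \mathcal{O}_C(sF|_C) \to 0
\]
with $h^1(X, (s-1)F) = 0$ (applying the previous part to the nef divisor $(s-1)F$) to reduce surjectivity of $H^0(X, F) \otimes H^0(X, (s-1)F) \to H^0(X, sF)$ to a multiplication statement on $C$. When $F^2 > 0$ is large enough relative to the arithmetic genus of $C$, surjectivity on $C$ follows from Castelnuovo's classical theorem on curves; the degenerate case $F^2 = 0$, where $|F|$ defines a fibration, can be treated directly using the fiber structure. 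The main obstacle throughout is the induction establishing $h^1(X, F) = 0$ and base point freeness, where termination of the negative-curve peeling procedure depends essentially on the hypothesis that $Q$ is effective.
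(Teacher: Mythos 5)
Your first step (effectivity of $F$) coincides with the paper's argument almost verbatim: $h^2(X,F)=h^0(X,K_X-F)=0$ because $(K_X-F)\cdot L<0$ while $L$ is nef, $F^2\ge0$ since $F$ is nef, $-K_X\cdot F\ge0$ because $-K_X=L+Q$ is effective, and then Riemann--Roch. For the remaining three conclusions, however, the paper does not argue from scratch: it invokes Lemma 3.1.1(b) of \cite{refH2} for $h^1(X,F)=0$ and base point freeness, invokes Proposition 3.1 of \cite{refH1} for normal generation whenever $F^2>0$ and $-K_X\cdot F\ge3$, and then disposes of the finitely many nef classes with $L\cdot F\le 2$ by an explicit case analysis. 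You attempt to reprove both cited results, and that is where the gaps lie.

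The serious gap is the ``peeling'' step. To get $h^1(X,F-L)=0$ you subtract an effective $N$ so that $F-L-N$ is nef and propose to conclude ``via the long exact sequences,'' but re-attaching $N$ is exactly where such arguments break: if $C$ is a component of the negative part, then $(F-L)\cdot C<0$, and the intermediate restrictions $\OO_C(F-L-N+N')$ can have degree less than $-1$ on the rational curve $C$ (for example $C$ may be the proper transform of the conic through all $n$ points, with $C^2=4-n\le -2$ for $n\ge6$), so $h^1$ of the restriction need not vanish and the long exact sequence yields nothing. Your structural claim that the relevant negative curves ``must be components of $Q$ or exceptional divisors'' is also false: the proper transform of a line through two of the $p_i$ is a $(-1)$-curve of neither type and can occur in the negative part. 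Making this induction work is precisely the content of the cited Lemma 3.1.1(b), and it is not the naive Zariski-decomposition regluing. For normal generation, your Castelnuovo argument on a member $C\in|F|$ requires $\deg(F|_C)=F^2\ge 2g(C)+1$, which unwinds to $-K_X\cdot F\ge3$; you never verify this condition, and checking that it holds for every nef $F$ with $F^2>0$ here --- together with smoothness and irreducibility of a general member in arbitrary characteristic, and the $F^2=0$ classes such as $2L-E_{i_1}-\cdots-E_{i_4}$ and $k(L-E_i)$ that you only gesture at --- is exactly the case analysis on $L\cdot F\le 2$ occupying the second half of the paper's proof. So the outline is sensible, but the two hard points (the $h^1$ induction and the low-degree normal generation cases) are asserted rather than proved.
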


\begin{proof} If $F$ is nef, then $F\cdot L\ge0$ since $L$ is effective 
(so $(K_X-F)\cdot L\le K_X\cdot L =-3$,
and thus $h^2(X,F)=h^0(X, K_X-F)=0$ since $L$ is also nef) and
$F^2\ge0$ \cite[Proposition II.3]{refH5}, but $-K_X=Q+L$ is effective,
so $-F\cdot K_X\ge0$, hence by Riemann-Roch, $h^0(X, F)\ge (F^2-F\cdot K_X)/2+1>0$, so
$F$ is effective. Moreover, $h^1(X, F)=0$ and $|F|$ is base point free 
by Lemma 3.1.1(b) of
\cite{refH2}. If in addition $F^2>0$ and $-K_X\cdot F\ge3$,
then $F$ is normally generated by Proposition 3.1 of \cite{refH1}.
But $-K_X=Q+L$, so $F$ nef means $-K_X\cdot F\ge L\cdot F\ge0$.
If $L\cdot F=0$, then it is easy to see that $F=0$, which 
trivially is normally generated.
If $L\cdot F=1$, then either $F=L$ or $F=L-E_i$ for some $i$,
and it is easy to see that in both cases $F$ is normally generated.
Finally, if $L\cdot F=2$, then up to reordering the points $p_i$, $F$ must be
either $2L$, $2L-E_1$, $\ldots$, $2L-E_1-\cdots-E_4$ or $2L-2E_1$
(since if $m_i>2$ for some $i$, then $F\cdot(L-E_i)<0$, but $L-E_i$ is nef,
and if $F=2L-E_1-\cdots-E_i$ for $i>4$, then $F^2<0$, while if
$F=2L-m_1E_1-\cdots-m_iE_i$ where $m_1>1$ and $m_2>0$, then
$F\cdot (L-E_1-E_2)<0$, which contradicts the assumption that $F$ is nef
since $L-E_1-E_2$ is linearly equivalent to an effective divisor).
In each case $F$ nef implies $-K_X\cdot F\ge3$ and $F^2>0$
unless either $F=2(L-E_i)$ or $F=2L-E_{i_1}-\cdots-E_{i_4}$, but in 
both cases $F$ is normally generated, since
a single point and also four points on an irreducible conic are complete intersections,
and hence in these cases each element of $|mF|$ is a sum of $m$ components,
each component being an element of $|F|$ (i.e., the map $H^0(X, F)^{\otimes m}\to H^0(X, mF)$
is onto).
\end{proof} 

\begin{lem}\label{FplusLlem} Assume $Q=2L-E_1-\cdots-E_n$ is the class of an effective divisor
on $X$. If $F$ is nef, then $H^0(X,F)\otimes H^0(X,L)\to H^0(X, F+L)$ is surjective.
\end{lem}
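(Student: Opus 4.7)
The plan is to restrict to a general line and reduce to a multiplication question on $\PP^1$, using the base-point-freeness of $|F|$ established in Lemma \ref{ng}.

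First, let $\ell \subset X$ be the strict transform of a general line of $\PP^2$, so $\ell \in |L|$, $\ell \cong \PP^1$, and $\ell$ meets none of the $E_i$. The sequence $0 \to \OO_X(F) \to \OO_X(F+L) \to \OO_\ell(F+L) \to 0$, combined with $h^1(X, F) = 0$ from Lemma \ref{ng}, yields a short exact sequence on global sections in which $H^0(F) \hookrightarrow H^0(F+L)$ is multiplication by the section $s_\ell \in H^0(L)$ cutting out $\ell$. Since $s_\ell \cdot H^0(F)$ is visibly in the image of the multiplication map $\mu \colon H^0(F) \otimes H^0(L) \to H^0(F+L)$, it suffices to show that the composition $H^0(F) \otimes H^0(L) \to H^0(F+L) \to H^0(\ell, \OO_\ell(F+L))$ is surjective.

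This composition factors as the tensor of restrictions to $\ell$ followed by multiplication on $\ell \cong \PP^1$, so its image is $A \cdot B$, where $A$ and $B$ are the images of $H^0(F) \to H^0(\OO_\ell(F))$ and $H^0(L) \to H^0(\OO_\ell(L))$ respectively. From $h^1(X, \OO_X) = 0$, the second restriction is surjective, giving $B = H^0(\OO_\ell(L)) \cong H^0(\OO_{\PP^1}(1))$. Writing $d = F \cdot L$, it remains to prove $A \cdot H^0(\OO_{\PP^1}(1)) = H^0(\OO_{\PP^1}(d+1))$. Since $|F|$ is base-point-free by Lemma \ref{ng}, $A \subseteq H^0(\OO_{\PP^1}(d))$ is a base-point-free linear series, and the evaluation map $A \otimes \OO_{\PP^1} \to \OO_{\PP^1}(d)$ is surjective with locally free kernel $K_A = \bigoplus_i \OO_{\PP^1}(-a_i)$, where each $a_i \ge 1$ and $\sum_i a_i = d$. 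Twisting by $\OO(1)$ and taking cohomology, the desired surjectivity is equivalent to $H^1(\PP^1, K_A(1)) = 0$, i.e.\ $a_i \le 2$ for all $i$.

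The main obstacle is this final step: controlling the splitting type of $K_A$. The plan is to combine two ingredients: a Riemann-Roch lower bound $h^0(F) \ge 1 + (F \cdot L)/2$, which follows from $F^2 \ge 0$ and $F \cdot Q \ge 0$ (both consequences of $F$ nef and $Q$ effective, as $-K_X = Q+L$), ensuring that $\dim A$ is large enough that a balanced splitting type would suffice; together with a case analysis for nef divisors $F$ parallel to the one at the end of the proof of Lemma \ref{ng}, to handle the special configurations. In particular, the subtlest case is when $F - L$ fails to be nef --- equivalently, when some pair $i \ne j$ has $m_i + m_j = F \cdot L$ --- where $H^1(X, F-L) \ne 0$ in general, and an \emph{a priori} unbalanced splitting must be ruled out via the constraints that nefness of $F$ places on the restriction map to $\ell$.
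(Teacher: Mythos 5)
Note first that the paper does not prove this lemma at all: it simply cites \cite[Theorem 3.1.2]{refH2} (see also \cite[Lemma 2.11]{refGuardoHar}). So you are attempting a genuine proof where the paper offers a reference, and your reduction is sound as far as it goes: restricting to a general $\ell\in|L|$, using $h^1(X,F)=0$ to see that $\mu$ is surjective iff the composite $H^0(F)\otimes H^0(L)\to H^0(\ell,\OO_\ell(F+L))$ is surjective, identifying the image with $A\cdot B$ where $B=H^0(\OO_{\PP^1}(1))$ is full, and translating the needed identity $A\cdot H^0(\OO_{\PP^1}(1))=H^0(\OO_{\PP^1}(d+1))$ into the condition $a_i\le 2$ on the splitting type of the kernel bundle $K_A$. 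All of that is correct.

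However, the final step is a genuine gap, not a routine verification, and it is precisely where all the content of the lemma lives. Your two proposed ingredients do not close it. First, the Riemann--Roch bound controls $h^0(X,F)$, but the relevant quantity is $\dim A=h^0(X,F)-h^0(X,F-L)$, and this is strictly less than $d+1$ exactly when $h^1(X,F-L)\neq 0$ --- which is exactly the hard case you flag. Second, even a good lower bound on $\dim A$ cannot force $a_i\le2$: the subspace $\langle x^d,y^d\rangle\subset H^0(\OO_{\PP^1}(d))$ is base point free with $K_A=\OO_{\PP^1}(-d)$, so base-point-freeness plus dimension counts are structurally insufficient; one needs to know \emph{which} subspaces $A$ actually arise, i.e.\ geometric information about the trace of $|F|$ on $\ell$. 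Third, the only place your argument uses the hypothesis that $Q$ is effective is the inequality $F\cdot Q\ge0$ inside Riemann--Roch, and that cannot suffice, because the statement is false without the conic hypothesis: for $n=6$ general points the class $F=5L-2(E_1+\cdots+E_6)$ is nef with $h^0(X,F)=3$ and $h^0(X,F+L)=10$, so $H^0(F)\otimes H^0(L)\to H^0(F+L)$ fails to be surjective for dimension reasons alone. Any correct proof must therefore deploy the effectivity of $Q$ in an essential, non-numerical way in the case $h^1(X,F-L)\neq0$ (the cited proofs do this by restricting to the conic rather than to a line and running an induction of the type appearing in Proposition \ref{UnifPlusNefLem}), and your proposal stops exactly at the point where that work would have to begin.
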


\begin{proof} This is Theorem 3.1.2 of
\cite{refH2}. See also Lemma 2.11 of \cite{refGuardoHar}.
\end{proof} 

If $F$ and $G$ are nef, it is not always true that
$H^0(X,F)\otimes H^0(X,G)\to H^0(X, F+G)$ is surjective,
even if $Q=2L-E_1-\cdots-E_n$ is the class of an effective divisor.
The next two results give special cases where
surjectivity does hold more generally than
what we have by Lemma \ref{FplusLlem}.

\begin{cor}\label{FplusLcor} Assume $Q=2L-E_1-\cdots-E_n$ is the class of an effective divisor
on $X$. Let $i,j,r,s$ be non-negative integers.
If $F$ is nef, then $H^0(X,iF+jL)\otimes H^0(X,rF+sL)\to H^0(X, (i+r)F+(j+s)L)$ is surjective.
\end{cor}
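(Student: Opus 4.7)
The plan is to reduce everything to products involving only $H^0(X, F)$ and $H^0(X, L)$, and then re-split the product to recover the two factors $iF+jL$ and $rF+sL$. The main observation is that every divisor of the form $aF + bL$ with $a, b \ge 0$ is nef, since it is a non-negative combination of the nef divisors $F$ and $L$. Thus each intermediate divisor in the arguments below is nef, so Lemmas \ref{ng} and \ref{FplusLlem} apply to it.

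First I would establish the auxiliary statement that for all $a, b \ge 0$, the multiplication map
$$\mu_{a,b}:\ H^0(X, F)^{\otimes a} \otimes H^0(X, L)^{\otimes b} \longrightarrow H^0(X, aF+bL)$$
is surjective. Since $F$ is nef, Lemma \ref{ng} says $F$ is normally generated, so $H^0(X,F)^{\otimes a} \to H^0(X, aF)$ is surjective. Then, by induction on $b$, I would use Lemma \ref{FplusLlem} applied to the nef divisor $aF + (b-1)L$ to get surjectivity of
$$H^0(X, aF+(b-1)L) \otimes H^0(X, L) \longrightarrow H^0(X, aF+bL),$$
and compose with $\mu_{a,b-1}$ to obtain surjectivity of $\mu_{a,b}$.

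Second, I would factor. Given any $\sigma \in H^0(X, (i+r)F + (j+s)L)$, apply surjectivity of $\mu_{i+r, j+s}$ to write $\sigma$ as a finite sum of products
$$(f_1 \cdots f_i)(f_{i+1} \cdots f_{i+r})(\ell_1 \cdots \ell_j)(\ell_{j+1} \cdots \ell_{j+s})$$
with $f_k \in H^0(X, F)$ and $\ell_k \in H^0(X, L)$. Regrouping each summand as a product of $(f_1 \cdots f_i)(\ell_1 \cdots \ell_j) \in H^0(X, iF+jL)$ (it lies in the image of $\mu_{i,j}$) and $(f_{i+1}\cdots f_{i+r})(\ell_{j+1}\cdots\ell_{j+s}) \in H^0(X, rF+sL)$ exhibits $\sigma$ in the image of the claimed multiplication map.

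No step looks like a serious obstacle: normal generation of the nef divisor $F$ gives one half of the factorization and iterated application of Lemma \ref{FplusLlem} handles the $L$-tensors. The only minor point to watch is that each divisor appearing in the induction (every $aF + tL$ for $0 \le t \le b$) is nef, which is immediate from non-negative combinations of nef classes. So the argument is essentially bookkeeping once the auxiliary surjectivity of $\mu_{a,b}$ is in hand.
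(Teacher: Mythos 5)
Your proof is correct and follows essentially the same route as the paper's: reduce to surjectivity of $H^0(X,F)^{\otimes i+r}\otimes H^0(X,L)^{\otimes j+s}\to H^0(X,(i+r)F+(j+s)L)$ via normal generation of $F$ (Lemma \ref{ng}) and iterated use of Lemma \ref{FplusLlem}, then observe that this map factors through $H^0(X,iF+jL)\otimes H^0(X,rF+sL)$. The only cosmetic difference is that you phrase the final factorization in terms of elements rather than a factorization of maps.
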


\begin{proof}
By Lemma \ref{ng}, 
$H^0(X,F)^{\otimes i+r}\to H^0(X,(i+r)F)$ is surjective and 
thus induces a surjective map
$H^0(X,F)^{\otimes i+r}\otimes H^0(X,L)^{\otimes j+s}\to H^0(X,(i+r)F)\otimes H^0(X,L)^{\otimes j+s}$,
and by Lemma \ref{FplusLlem}
$H^0(X,(i+r)F)\otimes H^0(X,L)^{\otimes j+s}\to H^0(X, (i+r)F+(j+s)L)$ is surjective,
hence $H^0(X,F)^{\otimes i+r}\otimes H^0(X,L)^{\otimes j+s}\to H^0(X, (i+r)F+(j+s)L)$ is surjective.
But this map factors as
$$
\noindent \begin{tabular}{clcl}
     & $H^0(X,F)^{\otimes i+r}\otimes H^0(X,L)^{\otimes j+s}$ & $\to$ & 
     $H^0(X,iF)\otimes H^0(X,jL)\otimes H^0(X,rF)\otimes H^0(X,sL)$ \\
$\to$ & $H^0(X,iF+jL)\otimes H^0(X,rF+sL)$                                 & $\to$ & $H^0(X, (i+r)F+(j+s)L)$ 
\end{tabular}
$$
hence $H^0(X,iF+jL)\otimes H^0(X,rF+sL) \to H^0(X, (i+r)F+(j+s)L)$ is surjective.
\end{proof} 

Let us say that a divisor class $F=iL-m_1E_1-\cdots-m_nE_n$ 
is {\it uniform} (some authors use the term {\it homogeneous} here) if $m_1=\cdots=m_n=m$ for some $m$.
Likewise we say that $Z=m_1p_1+\cdots+m_np_n$ is {\it uniform}
if $m_1=\cdots=m_n=m$ for some $m$.

\begin{prop}\label{UnifPlusNefLem} Assume $Q=2L-E_1-\cdots-E_n$ is the 
class of an effective divisor on $X$. 
If $F$ is nef and if $G$ is nef and uniform, then 
$H^0(X,F)\otimes H^0(X,G)\to H^0(X, F+G)$ is surjective.
\end{prop}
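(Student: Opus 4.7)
The plan is to proceed by induction on the uniform multiplicity $m$ in $G = aL - mE$, where $E := E_1 + \cdots + E_n$.

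For the base case $m = 0$, $G = aL$, and the proof mirrors that of Corollary \ref{FplusLcor}: by Lemma \ref{ng} applied to the nef class $L$, $H^0(L)^{\otimes a} \twoheadrightarrow H^0(aL)$, and iterating Lemma \ref{FplusLlem} (valid at each stage since $F + iL$ is nef as a sum of nef classes) yields a surjection $H^0(F) \otimes H^0(L)^{\otimes a} \twoheadrightarrow H^0(F + aL)$, which factors through $H^0(F) \otimes H^0(G)$.

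For the inductive step with $m \geq 1$, I would decompose $G = Q + G'$ with $G' := (a-2)L - (m-1)E$, a uniform class of multiplicity $m-1$. The first task is to check that $G'$ is nef. Using $G$'s nefness (in particular $a \geq 2m$ from $G \cdot (L - E_i - E_j) \geq 0$ when $n \geq 2$, and $2a \geq mn$ from $G \cdot Q \geq 0$), the intersections $G' \cdot (L - E_i - E_j) = a - 2m \geq 0$ and $G' \cdot Q = (2a - mn) + (n - 4)$ (nonnegative for $n \geq 4$) hold; for smaller $n$ or in boundary cases (e.g.\ $a = 2m$, so $G = mQ$, or $n = 1$, so $G = m(L - E_1)$), one substitutes $Q$ by a suitable nef class (such as $L - E_1$), handled via Lemma \ref{ng} applied to that class. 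Applying the inductive hypothesis to the nef $F$ and uniform nef $G'$ then yields $H^0(F) \otimes H^0(G') \twoheadrightarrow H^0(F + G')$, with $F + G'$ nef as a sum of nef classes.

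To complete the induction one needs the ``$+Q$'' extension $H^0(F + G') \otimes H^0(Q) \twoheadrightarrow H^0(F + G)$. When the conic is reducible, $Q$ splits as $(L - E_{I_1}) + (L - E_{I_2})$ and two applications of Lemma \ref{FplusLlem} (or its analog with $L - E_I$ in place of $L$, which follows from the general form of Theorem 3.1.2 of \cite{refH2}) suffice. When the conic is smooth and $n \geq 5$, $Q$ is a $(-1)$-curve with $h^0(Q) = 1$, and the direct multiplication argument fails; I would instead argue via the restriction sequence
\[0 \to \mathcal{O}_X(F + G') \to \mathcal{O}_X(F + G) \to \mathcal{O}_Q\bigl((F + G)|_Q\bigr) \to 0,\]
using $H^1(X, F + G') = 0$ (from Lemma \ref{ng}, since $F + G'$ is nef) together with the isomorphism $Q \cong \pr 1$, so as to reduce the problem to the classical surjectivity of multiplication of sections on $\pr 1$ and then lift via the restriction map. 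The main obstacle is this final extension step in the smooth-conic case for $n \geq 5$, where the $(-1)$-curve nature of $Q$ prevents a naive multiplication-map argument and forces the curve-restriction detour just described.
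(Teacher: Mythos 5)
Your overall strategy---induction on the uniform multiplicity $m$, peeling off $Q$ via $G=Q+G'$, and handling the last step by restricting to $Q$---is the same as the paper's, and you correctly locate the main difficulty. But the proposal has genuine gaps. First, the intermediate surjection you pose as the goal of the last step, $H^0(X,F+G')\otimes H^0(X,Q)\twoheadrightarrow H^0(X,F+G)$, is false whenever $h^0(X,Q)=1$: its image is $f\cdot H^0(X,F+G')$, i.e., the sections of $F+G$ vanishing on $Q$, which (since $F+G$ is nef, so $|F+G|$ is fixed-component free by Lemma \ref{ng}) is a proper subspace of codimension $(F+G)\cdot Q+1$ unless $F+G=0$. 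The same obstruction defeats your reducible-conic shortcut: if one of the two lines carries $k\ge2$ of the points, then $h^0(X,L-E_{I_1})=1$ and the step ``add $L-E_{I_1}$'' fails for the identical reason, so the case split buys nothing. What must be proved is surjectivity of $H^0(X,F)\otimes H^0(X,G)\to H^0(X,F+G)$ directly, by showing its image contains $f\cdot H^0(X,F+G')$ (the inductive hypothesis) and also surjects onto $H^0(Q,\OO_Q(F+G))$; this is precisely the paper's snake-lemma diagram, and the paper treats the (possibly reducible) conic uniformly by citing Lemma 2.11 of \cite{refGuardoHar} for the multiplication map on $Q$, with no case split.

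Second, and more seriously, your reduction ``to the classical surjectivity of multiplication of sections on $\pr 1$'' silently requires that $H^0(X,F)\to H^0(Q,\OO_Q(F))$ be surjective; otherwise the image of $H^0(X,F)\otimes H^0(X,G)$ inside $H^0(Q,\OO_Q(F))\otimes H^0(Q,\OO_Q(G))$ need not be large enough for surjectivity of multiplication on $Q$ to conclude anything. This amounts to $h^1(X,F-Q)=0$, which does not follow from Lemma \ref{ng} because $F-Q$ is typically neither nef nor effective (take $F=L$). The paper supplies it by writing $F-Q=F+L+K_X$, applying Serre duality to reduce to $h^1(X,-(F+L))$, and invoking Ramanujam vanishing for the nef and big class $F+L$; this is the one nontrivial cohomological input in the argument and your sketch omits it. A minor further point: verifying $G'\cdot(L-E_i-E_j)\ge0$ and $G'\cdot Q\ge0$ does not prove $G'$ is nef; the clean argument is that $G-Q=(a-2m)L+(m-1)Q$ is a nonnegative combination of the nef class $L$ and the effective class $Q$, so only components $D$ of a divisor in $|Q|$ could meet it negatively, and for such $D$ one gets $D\cdot Q<0$, hence $D\cdot(G-Q)>D\cdot G\ge0$, a contradiction.
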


\begin{proof} If $n=0$, then $X=\pr 2$, so a nef divisor
must be of the form $iL$ for $i\ge 0$, while if $n=1$ a nef divisor
is of the form $i(L-E_1)+jL$ for non-negative $i$ and $j$.
Thus for $n\le 1$ the result follows by Corollary \ref{FplusLcor}. 
Also, if $G=iL$, then again the result follows by Corollary \ref{FplusLcor}.
Thus we may assume that $G=iL-m(E_1+\cdots+E_n)$ with $m>0$ and $n\ge 2$.
Note that $G-Q$ is nef (and uniform) in this case 
(and hence $h^1(X, \OO_X(G-Q))=0$ by Lemma \ref{ng}):
to see that $G-Q$ is nef, note that since $G$ is nef, we have 
$G\cdot (L-E_1-E_2)\ge0$ so $i\ge 2m$. Thus $G=mQ+(i-2m)L$
so $G-Q$ is effective and nefness follows if $(G-Q)\cdot D\ge 0$ for
every irreducible component $D$ of $C$ where $C$
is an effective divisor whose class is $Q$. If in fact $D\cdot(G-Q)<0$,
then $0>D\cdot(G-Q)=D\cdot((i-2m)L+(m-1)Q)$ implies $D\cdot Q<0$
hence $0>D\cdot(G-Q)\ge D\cdot G$.  Since $G$ is nef,
$D\cdot G< 0$ is impossible, hence $D\cdot(G-Q)<0$ is 
also impossible, so $G-Q$ is nef.

Now consider the exact sequence
$0\to \OO_X(F-Q)\to \OO_X(F)\to \OO_Q(F)\to 0$.
Since $-K_X=Q+L$ we have $h^1(X, \OO_X(F-Q))=h^1(X, \OO_X(F+L+K_X))$,
and by Serre duality this becomes $h^1(X, \OO_X(-(F+L)))$,
and, since $F+L$ is nef and big, $h^1(X, \OO_X(-(F+L)))=0$ by 
Ramanujam vanishing (for the characteristic $p$ version,
see Theorem 1.6 \cite{refT} or Theorem 2.8 of \cite{refH1}).
Thus $H^0(X, \OO_X(F))\to H^0(Q,\OO_Q(F))$
is surjective.

Consider the maps 
$$\mu_1:H^0(X,\OO_X(G-Q))\otimes H^0(X,\OO_X(F))\to H^0(X,\OO_X(F+G-Q)),$$
$$\mu_2:H^0(X,\OO_X(G))\otimes H^0(X,\OO_X(F))\to H^0(X,\OO_X(F+G)),$$ 
and
$$\mu_3:H^0(Q,\OO_Q(G))\otimes H^0(X,\OO_X(F))\to H^0(Q,\OO_Q(F+G)).$$
Taking global sections of $0\to \OO_X(G-Q)\to \OO_X(G)\to \OO_Q(G)\to 0$
we get a short exact sequence; tensor by $H^0(X,\OO_X(F))$ and map to 
the short exact sequence given by taking global sections of
$0\to \OO_X(F+G-Q)\to \OO_X(F+G)\to \OO_Q(F+G)\to 0$ to obtain the following diagram
(where $V_F$ denotes $H^0(X,\OO_X(F))$):

$${{\begin{array}{rcccl}
0  \to & H^0(X,\OO_X(G-Q))\otimes V_F  & \to H^0(X,\OO_X(G))\otimes V_F 
\to & H^0(Q,\OO_Q(G))\otimes V_F & \to  0 \cr
{}  {} & \downarrow \raise3pt\hbox to0in{$\scriptstyle\mu_1$\hss} & {}
 \downarrow\raise3pt\hbox to0in{$\scriptstyle\mu_2$\hss}  {}
& \downarrow\raise3pt\hbox to0in{$\scriptstyle\mu _3$\hss} \cr
0  \to & H^0(X,\OO_X(F+G-Q)) & \to  H^0(X,\OO_X(F+G)) \to &
H^0(Q,\OO_Q(F+G)))& \to  0 \cr
\end{array}}}$$

By the snake lemma we have an exact sequence
$\hbox{cok }\mu_1\to \hbox{cok }\mu_2\to \hbox{cok }\mu_3\to 0$.
Since $H^0(X, \OO_X(F))\to H^0(Q,\OO_Q(F))$ is surjective, $\mu_3$
and $\mu_4:H^0(Q,\OO_Q(G))\otimes H^0(Q,\OO_Q(F))\to H^0(Q,\OO_Q(F+G))$
have the same cokernel. We thus have an exact sequence
$\hbox{cok }\mu_1\to \hbox{cok }\mu_2\to \hbox{cok }\mu_4\to 0$.
By Lemma 2.11 of \cite{refGuardoHar}, $\hbox{cok }\mu_4=0$.
Thus $\hbox{cok }\mu_2=0$ if $\hbox{cok }\mu_1=0$, and this
last follows by induction on $m$, using the fact that $G-Q$ is 
nef and uniform, eventually reducing to the case $G=iL$ done above. 
\end{proof}

The notion of nefness in the context of the preceding results can be described algebraically,
denoting by $\omega(I)$ the largest degree among elements of any 
minimal set of homogeneous generators of $I$.

\begin{cor}\label{omegacor} Assume $2L-E_1-\cdots-E_n$
is the class of an effective divisor, and
let $0\neq Z=m_1p_1+\cdots+m_np_n$ be a fat point subscheme.
Then $F_t(Z)$ is nef if and only if $t\ge \omega(I(Z))$. 
\end{cor}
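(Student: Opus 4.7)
The plan is to prove each direction separately, with Lemma~\ref{FplusLlem} driving the forward implication and Lemma~\ref{ng} together with a Bezout-type bound driving the reverse.

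For the forward direction, I would note that if $F_t(Z)$ is nef, then each $F_s(Z) = F_t(Z) + (s-t)L$ for $s \ge t$ is a sum of nef classes, hence nef, and Lemma~\ref{FplusLlem} yields surjectivity of the multiplication $H^0(X,F_s(Z)) \otimes H^0(X,L) \to H^0(X,F_{s+1}(Z))$. Under the standard identification $H^0(X,F_s(Z)) = I(Z)_s$ this reads $I(Z)_s \cdot R_1 = I(Z)_{s+1}$ for all $s \ge t$, so $I(Z)$ has no minimal generator in degree $>t$ and $\omega(I(Z)) \le t$.

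For the reverse direction, set $\omega = \omega(I(Z))$; since adding $L$ preserves nefness, it suffices to prove $F_\omega(Z)$ is nef. I would argue by contradiction: suppose some irreducible curve $C$ on $X$ satisfies $F_\omega(Z) \cdot C < 0$. Then $C \ne E_i$ (because $F_\omega(Z) \cdot E_i = m_i \ge 0$), and since a minimal generator of degree $\omega$ makes $I(Z)_\omega \ne 0$, $F_\omega(Z)$ is effective; the negative intersection then forces $C$ to be a component of every divisor in $|F_\omega(Z)|$, so every section of $\OO_X(F_\omega(Z))$ vanishes along $C$. The key algebraic input is that $\omega(I(Z)) = \omega$ implies $I(Z)_s \cdot R_1 = I(Z)_{s+1}$ for all $s \ge \omega$: any element of $I(Z)_{s+1}$ is a sum of products $g \cdot f$ with $g$ a generator of degree $d \le \omega$ and $f \in R_{s+1-d}$, and since $s+1-d \ge 1$ we may extract a linear factor $\ell$ from $f$, rewriting $gf = \ell \cdot (g f')$ with $g f' \in I(Z)_s$. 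Iterating this surjectivity forces $C$ to remain a fixed component of $|F_s(Z)|$ for every $s \ge \omega$.

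To contradict this, I would produce some $s$ for which $|F_s(Z)|$ has no fixed component, by showing $F_s(Z)$ is itself nef for $s$ large and invoking Lemma~\ref{ng} for base point freeness. For large-$s$ nefness I use the Bezout bound that every irreducible plane curve $D$ of degree $a$ satisfies $\mult_{p_i}(D) \le a$ (intersect with a line through $p_i$ not contained in $D$); writing its strict transform as $\tilde D = aL - \sum b_i E_i$, this gives $F_s(Z) \cdot \tilde D = sa - \sum b_i m_i \ge a\bigl(s - \sum_i m_i\bigr) \ge 0$ once $s \ge \sum_i m_i$, while $F_s(Z) \cdot E_i \ge 0$ always. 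Choosing $s \ge \max\{\omega, \sum_i m_i\}$ closes the contradiction. The main obstacle is exactly this large-$s$ nefness step; the Bezout bound $\mult_{p_i}(D) \le \deg(D)$ handles it uniformly and sidesteps any subtleties coming from the potentially complicated Mori cone of $X$, after which the rest reduces to a mechanical interplay between Lemma~\ref{FplusLlem} and Lemma~\ref{ng}.
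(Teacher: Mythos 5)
Your proof is correct, and your forward direction (nef implies $t\ge \omega(I(Z))$, via Lemma~\ref{FplusLlem} and the resulting surjections $I(Z)_s\cdot R_1 = I(Z)_{s+1}$ for $s\ge t$) is exactly the paper's argument. For the reverse direction, however, you take a genuinely different route. The paper argues the contrapositive directly: if $F_t(Z)$ is not nef, then either it is not effective (so $t<\alpha(I(Z))\le\omega(I(Z))$), or the prime divisor $C$ met negatively is a fixed component of $|F_t(Z)|$, so the zero locus of $I(Z)_t$ is $1$-dimensional; since $I(Z)$ cuts out a $0$-dimensional scheme, the subideal generated in degrees $\le t$ cannot be all of $I(Z)$, forcing a minimal generator in some degree $>t$. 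You instead propagate the fixed component upward --- using the same identity $I(Z)_{s+1}=R_1\cdot I(Z)_s$ for $s\ge\omega$ --- and contradict this with base-point-freeness of $|F_s(Z)|$ for $s\gg0$, where nefness of $F_s(Z)$ for $s\ge\sum_i m_i$ is secured by the Bezout bound $\mult_{p_i}(D)\le\deg D$ on strict transforms of irreducible plane curves, and base-point-freeness then comes from Lemma~\ref{ng}. Both arguments work: the paper's dimension count is shorter and needs neither the eventual nefness nor Lemma~\ref{ng} for this direction, while yours replaces the somewhat informal ``$1$-dimensional zero locus forces a higher-degree generator'' step with a fully explicit chain of surjections plus an unconditional supply of nef classes, at the cost of extra machinery (and note that your contradiction must indeed go through base-point-freeness rather than intersection numbers, since $F_s(Z)\cdot C$ may become non-negative for large $s$ even though $C$ stays fixed --- you handled this correctly).
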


\begin{proof}
First suppose $F_t(Z)$ is not nef, hence there is an effective prime divisor $C$ with $F_t(Z)\cdot C<0$.
If $F_t(Z)$ is not the class of an effective divisor, then $I(Z)_t=(0)$, hence $t < \alpha(I)\le \omega(I)$.
If $F_t(Z)$ is the class of an effective divisor, then $C$ is a component of every element of $|F_t(Z)|$;
i.e., $C$ is a fixed component of $|F_t(Z)|$, and hence the zero-locus of 
$I(Z)_t$ is 1-dimensional so $I(Z)$ requires a generator in some degree bigger than $t$,
and again we have $t<\omega(I)$.
Conversely, by Lemma \ref{FplusLlem} we see that if $F_t(Z)$ is nef,
then $t\ge \omega(I(Z))$.
\end{proof}

\section{Points on Smooth Conics}\label{smoothconics}

We now focus on the case of points on a smooth plane conic.
We will use the following notation. Given $Z=p_1+\cdots+p_n$ 
contained in a smooth plane conic $Q'$ defined by a form $f$, 
and given any vector space $V$ of forms of equal degree, 
let $q(V)$ denote the largest exponent $e$ such that $f^e$ is a
factor of every element of $V$. 

\begin{lem}\label{freepartptsonsmoothconic} For $n\ge5$, assume the 
points $p_1,\ldots,p_n$
lie on a smooth conic curve $Q'$ defined by a degree 2 form $f$, 
hence $Q=2L-E_1-\cdots-E_n$ is the class of a smooth
curve with $Q^2<0$. Let $I=I(mZ)$ where $Z=p_1+\cdots+p_n$. 
Then $\alpha(I^r)=2mr$, and, for each $t\ge 2mr$, $I^r_{\ t}=f^qI((rm-q)Z)_{t-2q}$,
where $q=q(I^r_{\ t})$ is the minimum value of $q(I_{t_1})+\cdots+ q(I_{t_r})$
over all sums $t=t_1+\cdots+t_r$ with $t_i\ge2m$ for all $i$, and 
$q(I_{t_i})$ is the least $s\ge0$ such that $2(t_i-2s)\ge (m-s)n$.
\end{lem}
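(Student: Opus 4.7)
The plan is to first compute $\alpha(I^r) = 2mr$, then identify $q(I_t)$ as the least index $s$ at which the uniform divisor class $F_{t-2s}((m-s)Z)$ becomes nef, and finally combine these with iterated use of Proposition~\ref{UnifPlusNefLem} to extract the product structure of $I^r_t$.

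The bound $\alpha(I^r) \le 2mr$ is immediate from $f^m \in I$, giving $f^{rm} \in I^r$ of degree $2rm$. For the matching lower bound, since $I^r \subseteq I(rmZ)$ it suffices to show $\alpha(I(mZ)) = 2m$ by induction on $m$: if $g \in I(mZ)_t$ with $t < 2m$, then $g|_{Q'} \in H^0(Q', \OO_{Q'}(t))$ has degree $2t$ on $Q' \cong \PP^1$ and vanishes at each $p_i$ to order at least $m$, forcing total vanishing $mn \ge 5m > 2t$, so $g|_{Q'} = 0$; thus $f \mid g$ and $g/f \in I((m-1)Z)_{t-2}$ contradicts the inductive hypothesis. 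For $q(I_t)$, on $X$ the class $F_{t-2s}((m-s)Z)$ intersects $E_j$ in $m-s$, the line class $L-E_j-E_k$ in $t-2m$, and $Q$ in $2(t-2s) - (m-s)n$; under the ambient hypotheses $t\ge 2m$ and $0\le s\le m$, only the intersection with $Q$ can be negative. When it is negative, $Q$ is a fixed component of $|F_{t-2s}((m-s)Z)|$, forcing $f$ to divide every element; when it is nonnegative the class is nef, so Lemma~\ref{ng} makes the linear system base-point free, producing an element not further divisible by $f$. Hence $q(I_t)$ is precisely the least $s\ge 0$ with $2(t-2s) \ge (m-s)n$.

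For the main identity, write $I^r_t = \sum_{t_1+\cdots+t_r = t,\, t_i\ge 2m} I_{t_1}\cdots I_{t_r}$. Each product $g_1\cdots g_r$ with $g_i \in I_{t_i}$ is divisible by $f^{q(I_{t_1})+\cdots+q(I_{t_r})}$, and the exponent is attained when each $g_i$ has minimal $f$-content; hence $q(I^r_t) = \min\sum q(I_{t_i})$ over admissible decompositions, and $I^r_t \subseteq f^q I((rm-q)Z)_{t-2q}$ follows from $I^r \subseteq I(rmZ)$ combined with divisibility by $f^q$. For the reverse inclusion, fix a decomposition $t = t_1+\cdots+t_r$ realizing the minimum and set $q_i := q(I_{t_i})$; by the previous step each $F_i := F_{t_i-2q_i}((m-q_i)Z)$ is nef and uniform, and partial sums $F_1+\cdots+F_k$ remain nef and uniform. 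Iterating Proposition~\ref{UnifPlusNefLem} produces a surjection
\[ H^0(X, F_1)\otimes\cdots\otimes H^0(X, F_r) \twoheadrightarrow H^0(X, F_1+\cdots+F_r) = H^0(X, F_{t-2q}((rm-q)Z)). \]
Translating back to graded pieces of ideals and multiplying through by $f^q$, we obtain $f^q I((rm-q)Z)_{t-2q} \subseteq I_{t_1}\cdots I_{t_r} \subseteq I^r_t$, which closes the equality.

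The main obstacle is the nefness analysis supporting both the characterization of $q(I_t)$ and the final invocation of Proposition~\ref{UnifPlusNefLem}: one must verify that for the uniform classes $F_{t-2s}((m-s)Z)$ in play, nonnegative intersection with $E_j$, $L-E_j-E_k$, and $Q$ is enough to guarantee nefness, with no hidden irreducible negative curve on $X$ creating extra constraints. This is standard for points on a smooth conic but deserves explicit justification. A secondary delicate point is confirming that a decomposition attaining the minimum exists with $0 \le q_i \le m$, which follows from the finiteness of admissible decompositions together with the explicit formula for $q(I_{t_i})$ under the assumption $t_i \ge 2m$.
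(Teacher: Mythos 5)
Your proposal is correct and follows essentially the same route as the paper: establish $\alpha(I)=2m$, identify $q(I_t)$ as the first $s$ at which $F_{t-2s}((m-s)Z)$ meets $Q$ non-negatively (equivalently, becomes nef, so that Lemma \ref{ng} removes the fixed component $Q'$), and then use Proposition \ref{UnifPlusNefLem} iteratively on each decomposition $t=t_1+\cdots+t_r$ to show $I_{t_1}\cdots I_{t_r}=f^{q'}I((rm-q')Z)_{t-2q'}$, with the minimizing decomposition absorbing all the others. Two minor variations: your lower bound $\alpha(I(mZ))\ge 2m$ via restriction to $Q'\cong\PP^1$ and a degree count is more elementary than (but equivalent to) the paper's use of the nef class $2L-E_1-\cdots-E_4$; and the nefness verification you flag as the main obstacle is dispatched in the paper without any classification of negative curves, by writing $F_{t-2s}((m-s)Z)=(t-2m)L+(m-s)Q$ with $t-2m\ge0$, so that a prime divisor $D$ with $F\cdot D<0$ forces $Q\cdot D<0$ and hence $D=Q$ (since $Q$ is irreducible). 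That observation closes the only gap you left open, so your argument is complete once you substitute it for the appeal to a list of negative curves.
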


\begin{proof} 
Since $H=2L-E_1-\cdots-E_4$ is easily seen to be the class of a reduced irreducible divisor 
of self-intersection 0, it is nef, but $H\cdot ((2m-1)L-mE_1-\cdots-mE_n)<0$
so $\alpha(I)\ge2m$. Since clearly $f^m\in I$, we see  
$\alpha(I)=2m$, hence $\alpha(I^r)=r\alpha(I)=2mr$. Since $f^m\in I_{2m}$ and $I_{2m}$ can be identified
with $H^0(X, mQ)$, and since $mQ$ is a multiple of a prime divisor
of negative self-intersection, we see that $h^0(X, mQ)=1$ and thus $f^m$ spans $I_{2m}$.
But the greatest common factor of $I_{t}$ divides that of $I_{2m}$ for any $t\ge 2m$, so
for any $t\ge 2m$ the greatest common factor of $I_t$ is a power of $f$ and hence defines a divisor
$q(I_t)Q'$. Let $q'=q(I_t)$. Dividing out by this gcd leaves $I((m-q')Z)_{t-2q'}$,
which is fixed component free. Thus $F_{t-2q'}((m-q')Z)$ is nef and uniform.
In particular, $F_{t-2q'}((m-q')Z)\cdot Q\ge0$, so $2(t-2q')-(m-q')n\ge0$.
Let $F=F_{t-2(q'-1)}((m-(q'-1))Z)$.
Since $Q'$ is a common divisor for $I((m-(q'-1))Z)_{t-2(q'-1)}$,
$|F|$ is not fixed component free, hence
$F$ is not nef by Lemma \ref{ng}. Since 
$F=(t-2(m-(q'-1))L+(m-(q'-1))Q$,
the only prime divisor which $F$ could meet
negatively is $Q$, hence $0>F\cdot Q=2(t-2(q'-1))-(m-(q'-1))n$.
Thus whenever $t\ge 2m$,
$q(I_t)$ is the least $s\ge0$ such that $2(t-2s)\ge (m-s)n$.

If $t\ge\alpha(I^r)$, by definition of powers of an ideal (and the 
fact that $I_t=0$ for $t<\alpha(I)=2m$) we have
$I^r_{\ t}=\sum_{{\bf j}\in S} \Pi_{i=1}^r I_{j_i}$, where 
$S$ is the set of all sequences ${\bf j}=\{j_1,\ldots,j_r\}$, such that
$2m\le j_1\le \cdots\le j_r$ with $j_1+\cdots+j_r=t$
(we ignore sequences with $j_1<2m$ since $I_{j_1}=0$ for $j_1<2m$). 
Thus $q$ is the minimum
$q(\Pi_{i=1}^r I_{j_i})$ among all ${\bf j}_i\in S$. 
Of course, for ${\bf j}'\in S$, we have
$q(\Pi_{i=1}^r I_{j'_i})=\sum_i q(I_{j'_i})$. 
Let $q'_i=q(I_{j'_i})$ and, recycling notation, let $q'=\sum_iq'_i$. Then 
\[
\begin{split}
\Pi_{i=1}^r I_{j'_i}= \Pi_{i=1}^r f^{q'_i}I((m-q'_i)Z)_{j'_i-2q'_i} = & f^{q'}\Pi_{i=1}^r I((m-q'_i)Z)_{j'_i-2q'_i}\\
\subseteq & f^{q'}I(\sum_i(m-q'_i)Z)_{t-2q'}=f^{q'}I((rm-q')Z)_{t-2q'}.
\end{split}
\]
We can identify 
$\Pi_{i=1}^r I((m-q'_i)Z)_{j'_i-2q'_i}$ inside $I((rm-q')Z)_{t-2q'}$
with the image of 
\[
\begin{split}
\bigotimes_{i=1}^rH^0(X, \OO_X(F_{j'_i-2q'_i}((m-q'_i)Z)))\to & H^0(X, \OO_X(\sum_{i=1}^rF_{j'_i-2q'_i}((m-q'_i)Z)))\\
= & H^0(X, \OO_X(F_{t-2q'}((rm-q')Z))).
\end{split}
\]
By Proposition \ref{UnifPlusNefLem}, this map is surjective, 
and hence the inclusion $\Pi_{i=1}^r I_{j'_i}\subseteq f^{q'}I((rm-q')Z)_{t-2q'}$
above is an equality.
Since $q\le q'$, we thus see for every ${\bf j}'\in S$
that $\Pi_{i=1}^r I_{j'_i}=f^qf^{q'-q}I((rm-q')Z)_{t-2q'}$ is contained in 
$\Pi_{i=1}^r I_{j_i}=f^qI((rm-q)Z)_{t-2q}$ for that ${\bf j}=\{j_1,\ldots,j_r\}$ giving the minimum value $q$,
and hence 
$$f^qI((rm-q)Z)_{t-2q}=\Pi_{i=1}^r I_{j_i}\subseteq I^r_{\ t}=
\sum_{{\bf j}'\in S} \Pi_{i=1}^r I_{j'_i}\subseteq f^qI((rm-q)Z)_{t-2q}.$$
\end{proof}

\begin{cor}\label{containmentprop} Let $n\ge5$ and assume the points $p_1,\ldots,p_n$ 
lie on a smooth conic curve $Q'$
defined by a degree 2 form $f$. Let $I=I(sZ)$ for $Z=p_1+\cdots+p_n$
and $s>0$. Let $m\ge r$ and let $t\ge 2ms$.
Then $I^{(m)}_{\ t}\subseteq I^r_{\ t}$ if and only if $q(I^r_{\ t})\le q(I^{(m)}_{\ t})$.
\end{cor}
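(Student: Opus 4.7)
The plan is to apply Lemma \ref{freepartptsonsmoothconic} twice — once to the ordinary power $I^r$ and once to the symbolic power $I^{(m)}$ — and compare the explicit descriptions it yields. Since $I = I(sZ)$ is the ideal of a fat point subscheme, its $m$th symbolic power is $I^{(m)} = I(msZ)$, so applying the lemma (playing the role of its $m$ with $s$, keeping $r$ the same) gives
$$I^r_{\ t} = f^{q_1}\,I((rs-q_1)Z)_{t-2q_1}, \qquad q_1 := q(I^r_{\ t}),$$
for any $t \ge 2rs$, which holds here because $t \ge 2ms \ge 2rs$. Applying the lemma again with $ms$ in place of its $m$ and with $r = 1$ gives
$$I^{(m)}_{\ t} = f^{q_2}\,I((ms-q_2)Z)_{t-2q_2}, \qquad q_2 := q(I^{(m)}_{\ t}),$$
valid since $t \ge 2ms$.

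For the ``if'' direction, I would assume $q_1 \le q_2$ and take an arbitrary element $f^{q_2}g \in I^{(m)}_{\ t}$ with $g \in I((ms-q_2)Z)_{t-2q_2}$. Rewriting it as $f^{q_1}\cdot(f^{q_2-q_1}g)$, the factor $f^{q_2-q_1}g$ has degree $t-2q_1$; and since each $p_i$ is a smooth point of the conic $Q'$, so that $f$ vanishes to order exactly $1$ at $p_i$, the form $f^{q_2-q_1}g$ vanishes to order at least $(q_2-q_1)+(ms-q_2)=ms-q_1\ge rs-q_1$ at each $p_i$ (using $m \ge r$). Hence $f^{q_2-q_1}g \in I((rs-q_1)Z)_{t-2q_1}$, and therefore $f^{q_2}g \in f^{q_1}I((rs-q_1)Z)_{t-2q_1} = I^r_{\ t}$.

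For the ``only if'' direction, I would observe that $t \ge 2ms = \alpha(I^{(m)})$, so $I^{(m)}_{\ t} \neq 0$; then by maximality in the definition of $q_2$ there must exist some $h = f^{q_2}g \in I^{(m)}_{\ t}$ with $f \nmid g$. By hypothesis $h \in I^r_{\ t}$, and by the formula above every element of $I^r_{\ t}$ is divisible by $f^{q_1}$. Since $f$ is irreducible and $f^{q_2+1} \nmid h$, this forces $q_1 \le q_2$.

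The only real subtlety is matching up the two applications of Lemma \ref{freepartptsonsmoothconic} correctly — in particular, noting that the symbolic power $I^{(m)}$ is again an ideal of the form $I(m'Z)$, so the lemma applies to it as the first ``ordinary power'' of itself. Once the two explicit factorizations $I^r_{\ t} = f^{q_1}(\cdots)$ and $I^{(m)}_{\ t} = f^{q_2}(\cdots)$ are in hand, the containment question reduces immediately to a comparison of the $f$-exponents, and the ingredient $f \in I(Z)$ (which uses smoothness of $Q'$) carries all the geometric weight.
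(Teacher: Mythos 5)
Your proposal is correct and follows essentially the same route as the paper's own proof: both apply Lemma \ref{freepartptsonsmoothconic} to obtain the factorizations $I^r_{\ t}=f^{q_1}I((rs-q_1)Z)_{t-2q_1}$ and $I^{(m)}_{\ t}=f^{q_2}I((ms-q_2)Z)_{t-2q_2}$, deduce $q_1\le q_2$ from the containment by comparing powers of $f$, and conversely use $ms-q_1\ge rs-q_1$ to absorb $f^{q_2-q_1}I((ms-q_2)Z)_{t-2q_2}$ into $I((rs-q_1)Z)_{t-2q_1}$. Your write-up is if anything slightly more explicit than the paper's (e.g.\ in noting $I^{(m)}_{\ t}\neq 0$ and the exact order of vanishing of $f$ at the $p_i$), but there is no substantive difference.
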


\begin{proof} Let $q_1=q(I^r_{\ t})$ and $q_2=q(I^{(m)}_{\ t})$.
Suppose we have $I^{(m)}_{\ t}\subseteq I^r_{\ t}$;
by Lemma \ref{freepartptsonsmoothconic}
we have
$$f^{q_2}I((ms-q_2)Z)_{t-2q_2}=I^{(m)}_{\ t}\subseteq I^r_{\ t}=
f^{q_1}I((rs-q_1)Z)_{t-2q_1},$$
which implies $f^{q_1}$ divides $f^{q_2}$ and hence that $q_1\le q_2$.

Conversely, $q_1\le q_2$ means we can divide out by
$f^{q_1}$, but $f^{q_2-q_1}I((ms-q_2)Z)_{t-2q_2}$
consists only of forms of degree $t-2q_1$
which vanish to order at least $ms-q_1$ at each point of $Z$, and hence
(since $ms-q_1\ge rs-q_1$)
is contained in the complete linear system $I((rs-q_1)Z)_{t-2q_1}$
of all forms of degree $t-2q_1$
vanishing to order at least $rs-q_1$ at each point of $Z$.
Multiplying back through by $f^{q_1}$ we have
$$I^{(m)}_{\ t}=f^{q_2}I((ms-q_2)Z)_{t-2q_2}\subseteq f^{q_1}I((rs-q_1)Z)_{t-2q_1}=I^r_{\ t}.$$
\end{proof}

\begin{lem}\label{formulaFORq} Assume the points $p_1,\ldots,p_n$ 
lie on a smooth conic curve $Q'$ with $n\ge5$. Let $I=I(Z)$ for $Z=p_1+\cdots+p_n$.
\begin{itemize}
\item[(a)] If $t\ge 2m$, then $q(I^{(m)}_{\ t}) = \hbox{max}\,(0,\,\lceil (mn-2t)/(n-4)\rceil)$.
\item[(b)] If $t\ge 2r$ and $n$ is odd, then $q(I^r_{\ t}) = \hbox{max}\,(0,\,\lceil (r(n+1)-2t)/(n-3)\rceil)$.
\end{itemize}
\end{lem}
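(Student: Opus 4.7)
The plan for (a) is to invoke Lemma \ref{freepartptsonsmoothconic} directly. Since $I^{(m)}=I(mZ)$, taking uniform multiplicity $m$ in that lemma characterizes $q(I^{(m)}_{\ t})$ as the least $s\ge 0$ satisfying $2(t-2s)\ge (m-s)n$. As $n\ge 5$, the coefficient $n-4$ is positive, so rearranging gives $s\ge (mn-2t)/(n-4)$; the least non-negative integer satisfying this is exactly $\max(0,\lceil (mn-2t)/(n-4)\rceil)$.

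For (b), I again apply Lemma \ref{freepartptsonsmoothconic}, now with $m=1$, which reduces the computation to
\[
q(I^r_{\ t}) \;=\; \min\Big\{ \sum_{i=1}^r q(I_{t_i}) \,:\, t=t_1+\cdots+t_r,\ t_i\ge 2 \Big\}.
\]
Part (a) with $m=1$ gives $q(I_{t_i})=\max(0,\lceil (n-2t_i)/(n-4)\rceil)$. The key observation, and the only place the hypothesis that $n$ is odd enters, is that each summand is either $0$ or $1$: it is $0$ exactly when $t_i\ge (n+1)/2$, while for $2\le t_i\le (n-1)/2$ the integer $n-2t_i$ lies in $[1,n-4]$, forcing the ceiling to equal $1$. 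Hence the minimization reduces to counting the fewest ``small'' parts $t_i\le (n-1)/2$ that a valid decomposition must contain.

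A decomposition with $k$ small parts requires $t\ge 2k+(r-k)(n+1)/2$, equivalently $k\ge (r(n+1)-2t)/(n-3)$, so the minimum admissible $k$ is at most $\max(0,\lceil (r(n+1)-2t)/(n-3)\rceil)$. To show this value is achieved, I will construct an explicit decomposition with this $k$: assign $k$ parts to $2$, assign $r-k-1$ large parts to $(n+1)/2$, and place the remaining slack in one last large part. The last value comes out $\ge (n+1)/2$ precisely because the chosen $k$ satisfies $k(n-3)\ge r(n+1)-2t$. The main point needing care will be the corner case $k=r$ (all parts small), but there the minimality of $k$ forces $t<2r+(n-3)/2$, which is exactly what makes $t-2(r-1)$ land in the interval $[2,(n-1)/2]$, completing the construction.
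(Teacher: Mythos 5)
Your proposal is correct and follows essentially the same route as the paper: part (a) by solving the inequality from Lemma \ref{freepartptsonsmoothconic} for $s$, and part (b) by using (a) with $m=1$ to see each $q(I_{t_i})\in\{0,1\}$ (with $n$ odd ruling out $t_i=n/2$) and then minimizing the number of small parts subject to the linear feasibility constraint. Your explicit optimal decomposition, including the $k=r$ corner case, is slightly more careful than the paper's (which asserts achievability without spelling it out); the only blemish is that ``the minimum admissible $k$ is at most'' should read ``at least,'' since the constraint supplies the lower bound and your construction supplies achievability.
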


\begin{proof} (a) Just apply Lemma \ref{freepartptsonsmoothconic} with $r=1$ and solve for $s$.

(b) Let $t=t_1+\cdots+t_r$ with $t_i\ge2$ for all $i$. By (a)
with $m=1$ we see that $q(I_{t_i})$ is 1 for $2\le t_i<n/2$ and 0 for $t>n/2$ ($t$ cannot equal $n/2$ 
since $n$ is odd). Thus
$q(I_{t_1}\cdots I_{t_r})=\sum_iq(I_{t_i})$ is the number $s$ of factors $I_{t_i}$ for which $t_i<n/2$.
Note that there is a product $I_{t_1}\cdots I_{t_r}$ having exactly $s$ factors $I_{t_i}$ with $2\le t_i<n/2$
if and only if $s$ satisfies $0\le s\le r$ and $2s+(r-s)\lceil n/2\rceil\le t$, so by Lemma \ref{freepartptsonsmoothconic}
$q(I^r_{\ t})$ is the least $s$ such that $0\le s\le r$ and $2s+(r-s)\lceil n/2\rceil\le t$. Solving $2s+(r-s)\lceil n/2\rceil\le t$
for $s$ using  $\lceil n/2\rceil =(n+1)/2$ gives $\lceil (r(n+1)-2t)/(n-3)\rceil\le s$. 
We claim that $u=\hbox{max}\,(0,\,\lceil (r(n+1)-2t)/(n-3)\rceil)$
is the least $s$ such that $0\le s\le r$ and $2s+(r-s)\lceil n/2\rceil\le t$.
Note that $2r\le t$ implies $\lceil (r(n+1)-2t)/(n-3)\rceil\le r$ 
so $0\le u\le r$. If $u=\lceil (r(n+1)-2t)/(n-3)\rceil$, then clearly
$2u+(r-u)\lceil n/2\rceil\le t$, while if $\lceil (r(n+1)-2t)/(n-3)\rceil<u=0$, then 
from $\lceil (r(n+1)-2t)/(n-3)\rceil<0$ we obtain
$2u+(r-u)\lceil n/2\rceil=r\lceil n/2\rceil\le t$.
\end{proof}

\begin{thm}\label{rhoptsonsmoothconic} Assume the points $p_1,\ldots,p_n$ 
lie on a smooth conic curve $Q'$. 
Let $I=I(Z)$ where $Z=p_1+\cdots+p_n$. Let $m$ and $r$ be positive.
\begin{itemize}
\item[(a)] If $n$ is even or $n=1$, then $I^{(m)}\subseteq I^r$ if and only if $m\ge r$;
in particular, $\rho(I)=1$.
\item[(b)] If $n>1$ is odd, then $I^{(m)}\subseteq I^r$ if and only if $(n+1)r-1\le nm$;
in particular, $\rho(I)=(n+1)/n$.
\end{itemize}
\end{thm}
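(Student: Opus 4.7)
The plan is to split the proof into three subcases depending on $n$: (i) $n$ is even or $n=1$, (ii) $n=3$, and (iii) $n\ge 5$ is odd.

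For (i), I will show that $I$ is a complete intersection and conclude $I^{(m)}=I^m$. When $n=1$ this is immediate, since $I$ is generated by two linear forms. When $n=2k$ is even, identify the smooth conic $Q'$ with $\PP^1$ via the degree-two Veronese embedding: in the coordinate ring $R/(f)$ of $Q'$ the vanishing ideal of $2k$ points is principal of degree $k$, so lifting gives $I=(f,g)$ with $\deg f=2$ and $\deg g=k$, a complete intersection by B\'ezout. Since symbolic and ordinary powers agree for ideals of regular sequences, $I^{(m)}=I^m$, and containment reduces to $m\ge r$, yielding $\rho(I)=1$.

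For (ii), three points on a smooth conic are non-collinear, so a direct Hilbert function computation yields $\alpha(I)={\rm reg}(I)=2$. Corollary~\ref{a=rCor} then reduces the containment question to $\alpha(I^{(m)})\ge 2r$. Invoking the standard formula $\alpha(I^{(m)})=\lceil 3m/2\rceil$ (a small case of SHGH for three general points), this becomes $3m\ge 4r-1=(n+1)r-1$; and since $\gamma(I)=3/2$ we get $\rho(I)=\alpha(I)/\gamma(I)=4/3=(n+1)/n$.

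For (iii), I apply Corollary~\ref{containmentprop}: if $m<r$ then $\alpha(I^{(m)})=2m<2r=\alpha(I^r)$ already breaks containment, and the claimed inequality $(n+1)r-1\le nm$ itself forces $m\ge r$, so assume $m\ge r$; then containment is equivalent to $q(I^r_{\,t})\le q(I^{(m)}_{\,t})$ for all $t\ge 2m$. Using Lemma~\ref{formulaFORq} and noting that $n$ odd makes both $n+1$ and $n-3$ even, the largest integer $t$ with $q(I^r_{\,t})\ge k$ is $t_k=(r(n+1)-(k-1)(n-3)-2)/2$; substituting into $q(I^{(m)}_{\,t_k})\ge k$ simplifies (after the $(k-1)$ terms cancel) to $nm\ge(n+1)r-k$, whose strongest case $k=1$ is precisely $nm\ge(n+1)r-1$. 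For the converse, when $nm\le(n+1)r-2$, I set $t=t_1=r(n+1)/2-1$ and check that $t\ge 2m$ (using $n\ge 5$ to get $mn\ge 4m$ and hence $r(n+1)\ge mn+2\ge 4m+2$) and that $q(I^r_{\,t})=1>0=q(I^{(m)}_{\,t})$, breaking containment. The resurgence $\rho(I)=(n+1)/n$ then follows as $\sup\{m/r:nm\le(n+1)r-2\}$, approached as $r\to\infty$.

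The main obstacle is the ceiling and parity analysis in case (iii), specifically verifying that $k=1$ is the binding constraint among all $k\ge 1$ and that the extremal value $t_1$ always lies in the applicability range of Corollary~\ref{containmentprop}. A subsidiary input is recognizing $I$ as a complete intersection for every $n$ even, via the Veronese description of the ideal restricted to the conic.
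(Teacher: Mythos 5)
Your proof is correct and follows essentially the same route as the paper's: the complete-intersection argument for $n$ even or $n=1$, Corollary \ref{a=rCor} with $\alpha(I)={\rm reg}(I)=2$ and $\alpha(I^{(m)})=\lceil 3m/2\rceil$ for $n=3$, and Corollary \ref{containmentprop} combined with Lemma \ref{formulaFORq} for odd $n\ge5$. The only differences are organizational: for odd $n\ge 5$ you compare the two $q$-values by analyzing the level sets $t\le t_k$ of $q(I^r_{\,t})$ and identifying $k=1$ as the binding constraint, whereas the paper splits on the parity of $m$ and exhibits specific critical degrees $t$; and you supply the Veronese justification of the complete-intersection claim for even $n$ that the paper only asserts.
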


\begin{proof} (a) For any $n>0$, if $I^{(m)}\subseteq I^r$, then $I^m\subseteq I^{(m)}\subseteq I^r$,
so $r\alpha(I)\le m\alpha(I)$. But $Z\neq0$, so $\alpha(I)>0$, hence $m\ge r$.
Conversely, if $n$ is even or $n=1$, $Z$ is a complete intersection, so $I^r=I^{(r)}$ (see the proof
of Theorem 32 (2), p.\ 110 of \cite{refM}),
hence $I^{(m)}\subseteq I^{(r)}=I^r$ if $m\ge r$.

(b) First say $n=3$. Then $\rho(I)=4/3$ by \cite[Theorem 4.2.3(a)]{refBH},
$\alpha(I)={\rm reg}(I)$ by \cite[Lemma 2.4.2]{refBH},
and $\alpha(I^{(m)})= \lceil 3m/2 \rceil $ by \cite[Lemma 2.4.1 and Example 4.5]{refBH}.
Thus $I^{(m)}\subseteq I^r$ if and only if $2r=r\alpha(I)\le\alpha(I^{(m)})=\lceil 3m/2\rceil$
by Corollary \ref{a=rCor}. But $2r\le \lceil 3m/2\rceil$ if and only if $4r\le 3m+1$.

Now assume $n\ge 5$. Note that if  $(n+1)r-1\le nm$, then $m\ge r$
(for if $m<r$, then $(n+1)r-1< nr$ hence $r< 1$, contrary to hypothesis), and 
we saw above that $I^{(m)}\subseteq I^r$ implies $m\ge r$.
Thus we may assume $m\ge r$. Our result will now follow by Corollary \ref{containmentprop} (using
Lemma \ref{formulaFORq} to compute $q$) once we verify that
$q(I^{(m)}_{\ t}) \ge q(I^r_{\ t})$ for all $t\ge 2m$ if and only if $(n+1)r-1\le nm$.

First assume $m$ is even. Since $n$ is odd, $(n+1)r-1\le nm$
is equivalent to $(n+1)r\le nm$. But if $(n+1)r>nm$, then
$q(I^{(m)}_{\ t}) = 0 < 1 \le q(I^r_{\ t})$ for $t=mn/2$. Conversely,
if $(n+1)r\le nm$, then $(mn-2t)/(n-4)\ge (r(n+1)-2t)/(n-3)$
(and hence $q(I^{(m)}_{\ t}) \ge q(I^r_{\ t})$) for $2m\le t \le nm/2$,
while for $t>mn/2$ we have $q(I^{(m)}_{\ t})= q(I^r_{\ t})=0$ (since both 
$(mn-2t)/(n-4)$ and $(r(n+1)-2t)/(n-3)$ are negative).

Now assume $m$ is odd. If $(n+1)r-1 > nm$, then
$q(I^{(m)}_{\ t}) = 1 < 2 \le q(I^r_{\ t})$ for $t=((m-1)n+4)/2$ (note that this implies $t\ge 2m$, so we 
can apply Lemma \ref{formulaFORq}(a)).
Conversely, if $(n+1)r-1 \le nm$, then
$(mn-2t)/(n-4)\ge (r(n+1)-2t)/(n-3)$ (and hence $q(I^{(m)}_{\ t}) \ge q(I^r_{\ t})$), for $2m\le t\le ((m-1)n+4)/2$
(this is easy to check since we are comparing two linear functions of $t$), while 
$q(I^{(m)}_{\ t}) = 1$ and $q(I^r_{\ t}) \le 1$ for $((m-1)n+4)/2 < t \le (mn-1)/2$,
and $q(I^{(m)}_{\ t}) = q(I^r_{\ t}) = 0$ for $t\ge (mn+1)/2$.
Thus $I^{(m)}\subseteq I^r$ if and only if $(n+1)r-1\le nm$.
In particular, $m/r\ge (n+1)/n>(n+1)/n -1/(rn)$ implies $I^{(m)}\subseteq I^r$
so $\rho(I)\le (n+1)/n$. On the other hand, $m/r < (n+1)/n -1/(rn)$
implies $I^{(m)}\not\subseteq I^r$. Since for any ratio $m/r$ less than
$(n+1)/n$ we can choose $s\gg0$ such that $ms/(rs) < (n+1)/n -1/(rsn)$,
we have $I^{(ms)}\not\subseteq I^{rs}$ so $m/r \le \rho(I)$ and thus
$(n+1)/n \le \rho(I)$; i.e., $\rho(I)=(n+1)/n$.
\end{proof}

\section{General Points}\label{genpts}

In this section we determine $\rho(I)$ and solve the containment problem
for each set of $n\le9$ general points of $\pr2$. Any $n\le5$ general
points lie on a smooth conic and hence these cases have been dealt with in 
the previous section, so now we consider the ideal $I$ of $n$ general
points for $6\le n\le 9$. In this section, $X$ will denote
the blow up of $\pr2$ at these $n$ points.
(For each $n$, once we determine exactly when 
$I^{(m)}\subseteq I^r$ occurs, our determination
of $\rho(I)$ uses the same argument as used at the end of 
the proof of Theorem \ref{rhoptsonsmoothconic}(b),
so we merely state the value of $\rho(I)$ without repeating the justification.)

\begin{prop} Let $I$ be the ideal of $n=6$ general points of $\pr2$.
Then $I^{(m)}\subseteq I^r$ if and only if 
$m\ge \frac{5}{4}r-\frac{5}{12}$, and thus $\rho(I)=\frac{5}{4}$.
\end{prop}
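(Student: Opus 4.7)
The strategy is to apply Corollary \ref{a=rCor}, reducing the containment problem to the computation of $\alpha(I^{(m)})$, and then to establish $\alpha(I^{(m)}) = \lceil 12m/5\rceil$ by exploiting the six conics through five of the six points. First, I show $\alpha(I) = {\rm reg}(I) = 3$: six general points lie on no conic but on a four-dimensional linear system of cubics, so $\alpha(I) = 3$; and the Hilbert function of six general points in $\pr 2$ is $1, 3, 6, 6, \ldots$, so ${\rm reg}(I) = 3$. By Corollary \ref{a=rCor}, $I^{(m)} \subseteq I^r$ if and only if $\alpha(I^{(m)}) \geq 3r$.

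The central claim is $\alpha(I^{(m)}) = \lceil 12m/5 \rceil$. For each $i$, let $\tilde C_i = 2L - \sum_{j\neq i} E_j$ denote the class on $X$ of the proper transform of the unique smooth conic through $\{p_j : j \neq i\}$; these are six $(-1)$-curves, pairwise disjoint on $X$, because any two of the original conics share exactly the four points $p_k$ with $k \notin \{i,j\}$, so Bezout gives $C_i \cdot C_j = 4$ on $\pr 2$, which is exactly cancelled after blowup, yielding $\tilde C_i \cdot \tilde C_j = 0$ and hence disjointness by irreducibility. For the lower bound, suppose $D \in |F_t(mZ)|$ with $5t < 12m$. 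This forces $2t < 5m$, so $D \cdot \tilde C_i = 2t - 5m < 0$ for each $i$, and each $\tilde C_i$ is a fixed component of $D$ with multiplicity at least $5m - 2t$. Disjointness on $X$ permits simultaneous subtraction: $D - (5m - 2t)\sum_i \tilde C_i$ is effective but has degree $t - 12(5m-2t) = 25t - 60m < 0$, a contradiction. For the matching upper bound, write $m = 5k + s$ with $0 \leq s \leq 4$ and take $k$ copies of $\sum_i \tilde C_i$ (a divisor of degree $12$ with uniform multiplicity $5$) together with a single element of $|F_{a_s}(sZ)|$, where $(a_0, a_1, a_2, a_3, a_4) = (0, 3, 5, 8, 10)$. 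Each of these small linear systems is non-empty: the $s = 1$ case is the four-dimensional system of cubics through six points, and for $s \in \{2,3,4\}$ the class $F_{a_s}(sZ)$ is nef on $X$ (easily verified against each $(-1)$-curve) and big, hence effective by Riemann-Roch. This produces a divisor of degree $12k + a_s = \lceil 12m/5\rceil$ with uniform multiplicity $m$.

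The condition $\lceil 12m/5 \rceil \geq 3r$ unwinds to $12m > 15r - 5$; since $\gcd(12, 15) = 3$ does not divide $5$, the equation $12m = 15r - 5$ has no integer solutions, and so over integers this is the same as $12m \geq 15r - 5$, i.e., $m \geq \tfrac{5}{4}r - \tfrac{5}{12}$. The value $\rho(I) = 5/4$ then follows from the limiting argument used at the end of the proof of Theorem \ref{rhoptsonsmoothconic}(b). The main obstacle is the lower bound: establishing that all six conics are forced into the base locus with the correct multiplicity, and that their pairwise disjointness on $X$ — which relies essentially on the generality of the six points — justifies subtracting them simultaneously to derive the degree contradiction.
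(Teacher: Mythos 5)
Your proposal is correct and follows essentially the same route as the paper: reduce to Corollary \ref{a=rCor} via $\alpha(I)={\rm reg}(I)=3$, prove $\alpha(I^{(m)})=\lceil 12m/5\rceil$ using the six conics through five of the six points, and then do the same integrality arithmetic (no solution to $12m=15r-5$) to get the stated bound. The only difference is cosmetic: for the lower bound the paper shows $5L-2E$ is nef by writing $5(5L-2E)=L+2\sum_i(2L-E+E_i)$ and intersecting, whereas you force the six disjoint $(-1)$-conics into the base locus with multiplicity $5m-2t$ and derive a negative degree after subtracting them --- the same geometric input packaged as a base-locus argument rather than a nefness statement.
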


\begin{proof} It is easy to see that $\alpha(I)={\rm reg}(I)=3$.
Now note that $25L-10E$ is nef, where $E=E_1+\cdots+E_6$,
since $25L-10E=L+2\sum_i(2L-E+E_i)$ is the class of a curve with 7 irreducible components
(coming from the line and the 6 conics through each subset of 5 of the 6 points)
and meets each component non-negatively.
Thus $5(5L-2E)$ is nef, hence
$(\alpha L-mE)\cdot (5L-2E)\ge0$ for $\alpha=\alpha(I^{(m)})$. 
Thus $5\alpha-12m\ge0$ or $\alpha\ge \frac{12m}{5}$.
But $\lceil\frac{12m}{5}\rceil L-mE$ is a non-negative integer linear combination
of the classes $3L-E$, $5L-2E$ and $12L-5E$ (just check mod 5), each of which is 
(by counting constants) effective.
Thus $\frac{12m}{5}\le\alpha(I^{(m)})\le\lceil\frac{12m}{5}\rceil$, so $\alpha(I^{(m)})=\lceil\frac{12m}{5}\rceil$
and $\gamma(I)=\frac{12}{5}$. 
Next, note that $m\ge \frac{5}{4}r-\frac{5}{12}$ if and only if
$\frac{12m}{5}\ge3r-1$ if and only if $\frac{12m}{5}>3r-1$ if and only if 
$\alpha(I^{(m)})=\lceil\frac{12m}{5}\rceil\ge 3r=
r{\rm reg}(I)$. Now apply Corollary \ref{a=rCor}.
\end{proof}

\begin{Rmk} The behavior of powers of an ideal $I$ of points is especially simple 
when, as is the case for $n=6$ general points in $\pr2$,
$\alpha(I)={\rm reg}(I)$: the powers are obtained by truncating homogeneous 
components of $I^{(r)}$ of low degree. 
For example, assume that $I\subsetneq k[\pr N]$ is a radical ideal
defining a finite nonempty set of points in $\pr N$ such that $\alpha(I)={\rm reg}(I)$.
Then $I^r= I^{(r)}\cap M^{r\alpha}$, where $M=(x_0,\ldots,x_N)\subset k[\pr N]$ is 
the irrelevant ideal and $\alpha=\alpha(I)$: clearly $I^r \subseteq I^{(r)}\cap M^{r\alpha}\subseteq I^{(r)}$ so
$I^r_{\ t} \subseteq(I^{(r)}\cap M^{r\alpha})_{t}=0$ for $t<r\alpha$,
while $I^r_{\ t}=I^{(r)}_{\ t}$ for $t\ge r\alpha=r{\rm reg(I)}$ by \cite[Lemma 2.3.3(c)]{refBH}.
\end{Rmk}

\begin{prop}
Let $I$ be the ideal of $n=7$ general points of $\pr2$.
Then $I^{(m)}\subseteq I^r$ if and only if either $m\ge \frac{8}{7}r$ or $r=m=1$, 
and thus $\rho(I)=\frac{8}{7}$.
\end{prop}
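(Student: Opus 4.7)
The plan is to adapt the strategy of Theorem \ref{rhoptsonsmoothconic}(b) to the del Pezzo surface $X$ of degree $2$ obtained by blowing up the 7 general points of $\pr 2$. For 7 general points the class $Q = 2L - E_1 - \cdots - E_7$ is not the class of an effective divisor, so the lemmas of Section \ref{bg} do not apply directly. Its role is played instead by the seven $(-1)$-curves $C_i = -K_X - E_i = 3L - 2E_i - \sum_{j \ne i} E_j$ (nodal cubics through the 7 points, with a node at $p_i$), whose sum $\sum_{i=1}^{7} C_i = 21L - 8E$ (with $E = E_1 + \cdots + E_7$) is the critical effective-but-non-nef uniform class. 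A complementary structural ingredient is the anticanonical $2$-to-$1$ cover $\phi\colon X \to \pr 2$, giving the double-cover decomposition $I^{(s)}_{\ 3s} = H^0(X, -sK_X) = I^s_{\ 3s} \oplus y \cdot I^{s-2}_{\ 3(s-2)}$ for every $s \ge 2$, where $y \in I^{(2)}_{\ 6} \setminus I^2_{\ 6}$ is a fixed sextic.

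First I would compute $\alpha(I^{(m)})$ for every $m \ge 1$ by analyzing the effective cone of $X$: the formula is $\alpha(I^{(m)}) = \lceil 8m/3\rceil$ except when $m \ge 8$ and $m \equiv 2 \pmod{3}$, in which case it equals $\lceil 8m/3\rceil - 1$, the decrease being realized by the decomposition $\alpha L - mE = \sum_i C_i + ((\alpha - 21)L - (m-8)E)$. This gives $\gamma(I) = 8/3$; note $\alpha(I)/\gamma(I) = 9/8 < 8/7$, so Corollary \ref{a=rCor} does not apply here.

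For the only-if direction, the necessary condition $3r = \alpha(I^r) \le \alpha(I^{(m)})$ combined with the formula forces $m \ge 8r/7$ except on a discrete set of ``gap pairs'' where $\alpha(I^{(m)}) = 3r$ exactly but $m < 8r/7$ (explicitly $(2,2), (9,8), (10,9), (17,15), (18,16), (19,17), (26,23), (27,24), \ldots$), plus the trivial pair $(1,1)$. For each non-trivial gap pair I would produce $f \in I^{(m)}_{\ 3r} \setminus I^r_{\ 3r}$: since $I^r_{\ 3r}$ is the $y^0$-summand of $I^{(r)}_{\ 3r}$ under the double-cover decomposition, it suffices to show that $I^{(m)}_{\ 3r}$ projects nontrivially onto the $y^1$-summand $y \cdot I^{r-2}_{\ 3(r-2)}$, and this follows by comparing the Riemann--Roch dimension of $I^{(m)}_{\ 3r}$ on $X$ with an upper bound on $\dim (I^{(m)}_{\ 3r} \cap I^r_{\ 3r})$ obtained by exhibiting its explicit generators (suitable products of powers of $\prod_i g_i$ with cubics in $I_3$).

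For the if direction, the case $r = m = 1$ is trivial, so assume $m \ge 8r/7$ with $r \ge 2$, and show $I^{(m)}_{\ t} \subseteq I^r_{\ t}$ for every $t$. For $t \ge 4r = r\cdot{\rm reg}(I)$, the saturation identity $I^r_{\ t} = I^{(r)}_{\ t}$ holds (as in the remark after the $n = 6$ proposition, using ${\rm reg}(I^r) \le 4r$ for 7 general points), and $m \ge r$ then gives $I^{(m)}_{\ t} \subseteq I^{(r)}_{\ t} = I^r_{\ t}$. For $3r \le t < 4r$ the hypothesis $m \ge 8r/7$ is precisely what is needed to rule out fixed components coming from the $C_i$'s in $|tL - mE|$; combined with analogs on $X$ of Lemmas \ref{FplusLlem} and \ref{UnifPlusNefLem} (valid on $X$ despite the failure of $-K_X$ itself to be normally generated, using instead normal generation of certain $L$-twisted classes), this yields the desired containment. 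Finally $\rho(I) = 8/7$ follows by the scaling argument at the end of the proof of Theorem \ref{rhoptsonsmoothconic}(b). The main obstacle I expect is the if direction in the range $3r \le t < 4r$ at the critical ratios $m/r = 8/7$ (pairs $(8j, 7j)$), where $\alpha$-saturation leaves no slack and one must verify directly that the form $\prod_{i=1}^{7} g_i \in I^{(8)}_{\ 21} \cap I^7_{\ 21}$ together with suitable higher-degree generators accounts for all of $I^{(m)}_{\ t}$ in this range.
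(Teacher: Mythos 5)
Your overall architecture (compute $\alpha(I^{(m)})$, observe that containment is automatic in degrees where the symbolic and ordinary powers agree, and fight it out in the critical degree $t=3r$) matches the paper's, and your double-cover decomposition $I^{(s)}_{\ 3s}=I^s_{\ 3s}\oplus y\cdot I^{s-2}_{\ 3(s-2)}$ of the anticanonical ring of the degree-two del Pezzo surface is a correct and attractive alternative to the paper's device (which shows the trace of $I^{(m)}_{\ 3r}$ on a smooth anticanonical curve $C$ is very ample while the trace of $(I_3)^r$ is composed with the pencil cut out by $|-K_X|$ on $C$). But the proposal has a fatal computational error at its foundation: $\alpha(I^{(m)})$ is not $\lceil 8m/3\rceil$ (even with your correction term). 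The inequality $3d-8m\ge0$ you are implicitly using is the condition that $dL-mE$ meet each $(-1)$-curve $C_i$ non-negatively, i.e.\ a nefness condition, not an effectivity condition; a class can be effective with the $C_i$ as fixed components. The correct bound comes from the nef class $8L-3E=\frac13\bigl(C+\sum_iC_i\bigr)$, giving $\alpha(I^{(m)})=\lceil 21m/8\rceil$ and $\gamma(I)=21/8$ (so in fact $\alpha(I)/\gamma(I)=8/7=\rho(I)$ here, not $9/8$). Your formula already disagrees with the truth at $m=16$ ($43$ versus $42$), and the discrepancy corrupts your list of critical pairs: $(19,17)$ and $(27,24)$ are not gap pairs at all (there $\alpha(I^{(m)})<3r$ and non-containment is trivial), while the genuine gap pair $(25,22)$ --- where $\lceil 21\cdot25/8\rceil=66=3\cdot22$ and $25<\frac87\cdot22$ --- is absent from your list; worse, your formula assigns it $\alpha(I^{(25)})=67>66$, which by your own if-direction would yield the \emph{wrong} conclusion $I^{(25)}\subseteq I^{22}$, contradicting the statement being proved. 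The true critical pairs are exactly $(8i+1,7i+1)$ for $i\ge1$ and $(8i+2,7i+2)$ for $i\ge0$.

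Two further gaps. For the if-direction, regularity only gives $I^r_{\ t}=I^{(r)}_{\ t}$ for $t\ge 4r$, but the argument needs it for all $t\ge 3r+1$ (when $\alpha(I^{(m)})>3r$ one must absorb the degrees $3r<t<4r$); the paper proves this by an induction showing $H^0(X,-K_X)\otimes H^0(X,iL-mK_X)\to H^0(X,iL-(m+1)K_X)$ is onto for $i>0$, via two snake-lemma diagrams (restricting to $C$ and then to a length-two divisor on $C$), and ``ruling out fixed components plus analogues of Lemmas \ref{FplusLlem} and \ref{UnifPlusNefLem}'' is not a substitute for that surjectivity statement. And in your non-containment step, exhibiting explicit elements of $I^{(m)}_{\ 3r}\cap I^r_{\ 3r}$ gives a \emph{lower} bound on its dimension, not the upper bound you need to conclude that the projection to the $y$-summand is nonzero; to make the dimension count close you would have to prove those elements span the intersection, which is essentially the original problem. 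The double-cover idea is worth salvaging, but as written neither direction of the equivalence is established.
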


\begin{proof} Let $E=E_1+\cdots+E_7$ and let $C$ be a general (hence smooth
and irreducible) curve $C\in |-K_X|$; thus $[C]=[3L-E]$.

First, note for each $i$ that $C-E_i$ is linearly equivalent to an effective divisor $C_i$ (by counting constants) 
which is reduced and irreducible (otherwise $C-E_i$ would be a sum of two or more
effective divisors, at least one of which would come from either
a line through 3 or more points or a conic through 6 or more points, but neither can happen
since the points are general). Thus $3(8L-3E)=C+\sum_iC_i$ is nef
since it meets each component non-negatively, hence
$(\alpha L-mE)\cdot (8L-3E)\ge0$ for $\alpha=\alpha(I^{(m)})$. 
Thus $8\alpha-21m\ge0$ so $\alpha\ge \lceil\frac{21m}{8}\rceil$.
But $\lceil\frac{21m}{8}\rceil L-mE$ is a non-negative integer linear combination
of the classes $3L-E$, $8L-3E$ and $21L-8E$ (just check mod 8), each of which is effective.
Thus $\alpha\le \lceil\frac{21m}{8}\rceil$, so $\alpha=\lceil\frac{21m}{8}\rceil$. 

We claim that $I^r_{\ t}=I^{(r)}_{\ t}$ for all $t\ge 3r+1$. To justify this, it is enough to show that
$(I_3)^{r-1}(I_{t-3r+3})=I^{(r)}_{\ t}$; i.e., that 
$H^0(X, -K_X)^{\otimes (r-1)}\otimes H^0(X, iL-K_X)\to H^0(X, iL-rK_X)$
is onto for each $i>0$ and each $r\ge1$.
To prove this, it is enough in fact to show that 
$H^0(X, -K_X)\otimes H^0(X, iL-mK_X)\to H^0(X, iL-(m+1)K_X)$
is onto for each $i>0$ and each $m\ge0$.
Now consider the following diagram, where $V_F=H^0(X, F)$
and the vertical maps are the canonical multiplication maps:

$$\begin{matrix}
0 & \to & H^0(X, G-C)\otimes V_F  & \to & H^0(X,G)\otimes V_F &
\to & H^0(C, G\vert_C)\otimes V_F & \to & 0 \cr
{} & {} & \downarrow \raise3pt\hbox to0in{$\scriptstyle\mu_1$\hss} & {}
& \downarrow\raise3pt\hbox to0in{$\scriptstyle\mu_2$\hss} & {}
& \downarrow\raise3pt\hbox to0in{$\scriptstyle\mu _3$\hss} &
{} & {} \cr
0 & \to & H^0(X,F+G-C) & \to & H^0(X,F+G) & \to &
H^0(C,(F+G)\vert_C)& \to & 0 \cr
\end{matrix}$$

Consider the case that $F=-K_X$ and $G=iL-mK_X$.
To see that the rows of the diagram are exact, note that since $C$ is a prime divisor of 
non-negative self-intersection, it is nef, hence so are $-mK_X$ and $iL-mK_X$ for any $i>0$ and $m\ge0$,
hence $h^1(X, -mK_X)=0$ and $h^1(X,iL-mK_X)=0$ by \cite{refH5}.
Note when $m=1$ that $\mu_1$ 
(i.e., $H^0(X, iL)\otimes H^0(X, -K_X)\to H^0(X, iL-K_X)$) is onto by \cite{refH3}. 
If we show that $\mu_3$ is onto for each $m\ge1$, then the snake lemma applied to the diagram and
induction on $m$ show that $\mu_2$ is onto for all $m\ge1$. (We can at least see that
$\mu_3$ is onto for $m=0$: apply the snake lemma to the diagram above
with $F=iL$ and $G=-K_X$, using the fact that in this case $\mu_2$ is 
$H^0(X, -K_X)\otimes H^0(X, iL)\to H^0(X, iL-K_X)$, which we just noted is onto.)

To show that $\mu_3$ is onto, let $Z\in|C|_C|$ be a divisor on $C$
consisting of two distinct points and consider the diagram
$$\begin{matrix}
0 & \to & H^0(C, G-C)\otimes V_F  & \to & H^0(C,G)\otimes V_F &
\to & H^0(Z, G\vert_Z)\otimes V_F & \to & 0 \cr
{} & {} & \downarrow \raise3pt\hbox to0in{$\scriptstyle\mu_1'$\hss} & {}
& \downarrow\raise3pt\hbox to0in{$\scriptstyle\mu_2'$\hss} & {}
& \downarrow\raise3pt\hbox to0in{$\scriptstyle\mu _3'$\hss} &
{} & {} \cr
0 & \to & H^0(C,F+G-C) & \to & H^0(C,F+G) & \to &
H^0(Z,(F+G)\vert_Z)& \to & 0 \cr
\end{matrix}$$
where now $F=-K_X|_C$, $V_F=H^0(C, F)$ and $G=(iL-mK_X)|_C$. 
The map $\mu_1'$ is onto for $m=1$ by the parenthetical remark
at the end of the preceding paragraph. Thus the snake lemma applied to the
new diagram, using induction on $m$, implies $\mu_2'$ is onto for all $m\ge1$
if we show $\mu_3'$ is onto. Since
$0\to \OO_X\to \OO_X(-K_X)\to \OO_C(-K_X)\to 0$ is exact on global sections
the map $H^0(X, F)\to H^0(C, F|_C)$ is surjective, 
hence $\mu_2'$ and $\mu_3$ have the same image. Thus surjectivity of $\mu_2'$ implies that of
$\mu_3$, which is what we want to show. To see that $\mu_3'$ is onto, note that
$H^0(Z,(F+G)\vert_Z)$ is just the vector space of $k$-valued functions on the two points
of $Z$, as is $H^0(Z, G\vert_Z)$, and $\mu_3'$ just multiplies a function
in $H^0(Z, G\vert_Z)$ by the values of an element of $V_F$ at the two points.
But $|-K_X|_C|$ is base point free (since a complete linear of degree 2 on an elliptic curve
$C$ is base point free); i.e.,
$V_F=H^0(C, F)$ is base point free, hence there is an element of $V_F$ which is nonzero at both points,
so $\mu_3'$ is onto, which finishes our justification that $I^r_{\ t}=I^{(r)}_{\ t}$ for all $t\ge 3r+1$.

We now proceed to find all $m>0$ for each $r>0$ such that $I^{(m)}\subseteq I^r$.
If $\lceil\frac{21m}{8}\rceil < 3r$, then $\alpha(I^{(m)})<\alpha(I^r)$, so $I^{(m)}\not\subseteq I^r$.
If $\lceil\frac{21m}{8}\rceil \ge 3r$, then $m\ge r$
(if not, then $r>m$, but $\lceil\frac{21m}{8}\rceil \ge 3r$ implies $\frac{21m}{8} > 3r-1$
and so $\frac{8}{3}>8r-7m$ which with $r>m$ gives $\frac{8}{3}>8r-7m>r$, so $r=1$ and $m=0$,
contrary to hypothesis), so $I^{(m)}\subseteq I^{(r)}$.
Now suppose $\lceil\frac{21m}{8}\rceil > 3r$; then $I^{(m)}\subseteq I^r$, since $0=I^{(m)}_{\ t}\subseteq I^r_{\ t}$
for $t<\alpha(I^{(m)})=\lceil\frac{21m}{8}\rceil$, and $t\ge\alpha(I^{(m)})=\lceil\frac{21m}{8}\rceil$
implies $t\ge 3r+1$, so $I^{(m)}_{\ t}\subseteq I^{(r)}_{\ t}=I^r_{\ t}$. 

So suppose $\alpha=\lceil\frac{21m}{8}\rceil = 3r$. 
By the same arguments, $I^{(m)}_{\ t}\subseteq I^r_{\ t}$ if either $t<3r$ or $t>3r$,
so $I^{(m)}\subseteq I^r$ if and only if $I^{(m)}_{\ 3r}\subseteq I^r_{\ 3r}$.
By checking mod 8,
we can see $\lceil\frac{21m}{8}\rceil = 3r$ implies $m$ is congruent to 0, 1 or 2 modulo 8.
If $m\equiv0$ mod 8, then $m=8i$ for some $i$, so $r=7i$ and $\alpha=21i$.
But $|\alpha L - mE|=|i\sum_jC_j|=i\sum_jC_j$ is fixed,
since the curves $C_j$ are disjoint prime divisors of negative self-intersection.
Thus $I^{(m)}_{\ \alpha}$ is 1-dimensional, spanned by the product of the $r=7i$ cubic forms
corresponding to the summands of $i\sum_jC_j$, with each cubic form
vanishing at each of the $n=7$ points and hence being in $I_3$. 
Thus $I^{(m)}_{\ \alpha}\subseteq I^r_{\ \alpha}$. 

Say $m\equiv1$ mod 8, so $m=8i+1$ and $r=7i+1$
for some $i$. If $m=1$, then clearly $I=I^{(m)}\subseteq I^r$ if and only if $r=1$,
so say $m>1$ and hence $i\ge 1$. Then $I^{(m)}_{\ 3r}=H^0(X, 3(8L-3E)+(i-1)\sum_jC_j)$; 
note that $|3(8L-3E)+(i-1)\sum_jC_j|=
|3(8L-3E)|+(i-1)\sum_jC_j$ since the $C_j$ are disjoint and of negative self-intersection
with $(8L-3E)\cdot C_j=0$, so $(i-1)\sum_jC_j$ is fixed in the linear system. 
Let $C$ be a smooth section of $|-K_X|$.
By Serre duality, $h^1(X, \OO_X(3(8L-3E)-C))=h^1(X, \OO_X(-(3(8L-3E))))$ and
since $3(8L-3E)$ is nef and big, $h^1(X, \OO_X(-(3(8L-3E))))=0$ by 
Ramanujam vanishing (for the characteristic $p$ version,
see Theorem 1.6 \cite{refT} or Theorem 2.8 of \cite{refH1}).
Thus $0\to \OO_X(3(8L-3E)-C)\to \OO_X(3(8L-3E))\to \OO_C(3(8L-3E))\to 0$
is exact on global sections, and $\OO_C(3(8L-3E))$ is very ample
(since it has degree 9, and any divisor of degree at least 3 on an elliptic curve is very ample).
In particular, the trace of $|3(8L-3E))|$ on $C$ is not composed with a pencil.
Since the trace of $I^r_{\ 3r}=(I_3)^r$ on $C$ is composed with the pencil given by the trace of
$|-K_X|$ on $C$, we see
that $I^{(m)}_{\ 3r}\not\subseteq I^r_{\ 3r}$, and hence $I^{(m)}\not\subseteq I^r$.

Finally, say $m\equiv2$ mod 8 (i.e., $m=8i+2$ and $r=7i+2$
for some $i$). Then $I^{(m)}_{\ 3r}=H^0(X, 6(8L-3E)+(i-2)\sum_iC_i)$ for $i\ge2$,
while $I^{(m)}_{\ 3r}=H^0(X, 3(8L-3E)+C)$ for $i=1$ and $I^{(m)}_{\ 3r}=H^0(X, 2C)$ for $i=0$. 
Arguing as before, the trace of $I^{(m)}_{\ 3r}$ on $C$ is not composed with a pencil but
the trace of $I^r_{\ 3r}$ on $C$ is composed with the same pencil as before. Thus
$I^{(m)}_{\ 3r}\not\subseteq I^r_{\ 3r}$, so $I^{(m)}\not\subseteq I^r$.

Reviewing, we have $I^{(m)}\subseteq I^r$ if and only if $m=r=1$, or 
$\lceil\frac{21m}{8}\rceil > 3r$ (which is equivalent to $\frac{21m}{8} > 3r$; i.e., to $m/r>8/7$), 
or $m=8i$ and $r=7i$ for $i\ge 1$ (which is equivalent to $m/r=8/7$).
Thus $I^{(m)}\subseteq I^r$ if and only if $m=r=1$ or $m/r\ge 8/7$.
\end{proof}

The following result is interesting in that early evidence suggested that
$\rho(I)\le \sqrt{2}$ for $n$ generic points of $\pr2$ (see, for example,
\cite[Corollary 1.3.1, Proposition 4.4]{refBH}). For $n=8$, however, we now see that
$\rho(I)>\sqrt{2}$ (but only by a bit).

\begin{prop}
Let $I$ be the ideal of $n=8$ general points of $\pr2$.
Then $I^{(m)}\subseteq I^r$ if and only if either $m=r=1$ or
$m\ge \frac{17}{12}r-\frac{1}{3}$, and thus $\rho(I)=\frac{17}{12}$. 
\end{prop}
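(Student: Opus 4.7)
The plan is to mirror the structure of the preceding $n=7$ proof, adapting to reflect that for $n=8$ general points we have $\alpha(I)=3$, ${\rm reg}(I)=4$, and $\gamma(I)=48/17$, so that here $\rho(I)={\rm reg}(I)/\gamma(I)=17/12$ (rather than $\alpha(I)/\gamma(I)$, as was the case for $n=7$). The first step is to compute $\alpha(I^{(m)})=\lceil 48m/17\rceil$. The key nef class is $F=17L-6E$ with $E=E_1+\cdots+E_8$, satisfying $F^2=1$ and $F\cdot(-K_X)=3$, so $h^0(F)\ge 3$ by Riemann-Roch; nefness follows from intersecting $F$ with the finitely many negative self-intersection curve classes on the degree-$1$ del Pezzo surface. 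Nefness yields $(\alpha L-mE)\cdot F\ge 0$, hence $\alpha(I^{(m)})\ge \lceil 48m/17\rceil$, and the matching upper bound is obtained by exhibiting effective divisors of class $\lceil 48m/17\rceil L-mE$ built from combinations of $-K_X$, $F$, and $48L-17E$ (the last effective by a direct $\chi$-calculation). The value $\gamma(I)=48/17$ follows as a corollary.

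The second and hardest step is to show $I^r_{\ t}=I^{(r)}_{\ t}$ for all $t\ge 4r$; this is the natural analog of the multiplication-surjectivity step from the $n=7$ proof. The main obstacle is that $|-K_X|$ now has a basepoint $\tilde p_0$ (the ninth basepoint of the cubic pencil through $p_1,\ldots,p_8$), so the clean restriction-to-elliptic-curve diagram chase of the $n=7$ argument does not immediately go through. I plan to resolve this either by basing the induction on $-2K_X$, which is basepoint-free on the degree-$1$ del Pezzo, and running an analogous two-step multiplication argument; or by retaining $|-K_X|$ and explicitly tracking the basepoint via the exact sequence $0\to \OO_X(-\tilde p_0)\to \OO_X\to \OO_{\tilde p_0}\to 0$ in a refined diagram chase, together with Ramanujam vanishing as in the proof of Proposition \ref{UnifPlusNefLem}.

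Given Steps 1 and 2, the containment analysis proceeds as follows. For $r=1$ one has $I^{(m)}\subseteq I^{(1)}=I=I^1$ automatically for every $m\ge 1$, covering the exceptional case $m=r=1$. For $r\ge 2$, I claim $I^{(m)}\subseteq I^r$ if and only if $\alpha(I^{(m)})\ge 4r$. Sufficiency is immediate from Steps 1 and 2: the inequality forces $m\ge r$, so $I^{(m)}_{\ t}=0$ for $t<4r$, while $I^{(m)}_{\ t}\subseteq I^{(r)}_{\ t}=I^r_{\ t}$ for $t\ge 4r$. For necessity, suppose $\alpha(I^{(m)})<4r$: if $\alpha(I^{(m)})<3r$ take $t=\alpha(I^{(m)})$ so that $I^r_{\ t}=0\ne I^{(m)}_{\ t}$; otherwise $3r\le \alpha(I^{(m)})<4r$, and a pigeonhole on degrees shows every product $f_1\cdots f_r\in I^r_{\ t}$ of total degree at most $4r-1$ must include a factor in $I_3$, so that (since every cubic in $I_3$ vanishes at $p_0$) $I^r_{\ t}$ sits inside $\{f\in R_t : f(p_0)=0\}$, while a direct Riemann-Roch computation shows $|tL-mE|$ does not have $\tilde p_0$ as a basepoint for a suitable $t$, producing a section of $I^{(m)}_{\ t}$ not vanishing at $p_0$ and hence not in $I^r_{\ t}$. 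Since $\lceil 48m/17\rceil\ge 4r$ is equivalent to $12m+4\ge 17r$, i.e., to $m\ge 17r/12-1/3$, this establishes the claimed iff, and $\rho(I)=17/12$ then follows by the asymptotic argument at the end of the proof of Theorem \ref{rhoptsonsmoothconic}(b).
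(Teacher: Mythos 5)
Your overall architecture matches the paper's, but two of your steps do not hold up as stated, and one of them is a genuine gap. The smaller issue is Step 2: you do not need to prove $I^r_{\ t}=I^{(r)}_{\ t}$ for all $t\ge 4r$ by a multiplication-map induction (which you only sketch, offering two alternative unfinished strategies for dealing with the base point of $|-K_X|$). The sufficiency direction only requires the containment $I^{(m)}_{\ t}\subseteq I^r_{\ t}$ for $t\ge r\,{\rm reg}(I)=4r$ once $m\ge r$, and this is exactly \cite[Lemma 2.3.4]{refBH} (equivalently \cite[Lemma 2.3.3(c)]{refBH}), which the paper simply cites. So your hardest step is an unnecessary detour, but not an error.

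The genuine gap is in the necessity direction, where you assert that ``a direct Riemann-Roch computation shows $|tL-mE|$ does not have $\tilde p_0$ as a basepoint for a suitable $t$.'' This is not a computation Riemann-Roch can do, and it is where most of the actual work lies. First, the argument is vacuous unless $p_9$ is distinct from $p_1,\ldots,p_8$; this must be proved (the paper does it by showing $h^0(C,\OO_C(C-E_i))=0$ via an explicit dimension count for $|3L-2E_1-E_2-\cdots-E_j|$). Second, $p_9$ is \emph{not} a general point with respect to $p_1,\ldots,p_8$ --- it lies on every cubic through them --- so the expected-dimension count for the system $|tL-mE|$ with an assigned base point at $p_9$ has no a priori validity: Riemann-Roch gives only $\chi$, and there is no control on $h^1$ for this special configuration. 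What is actually needed is a geometric base-point-freeness argument at the point $p\in X$ lying over $p_9$: write $\alpha L-mE=aC+b(17L-6E)+c(48L-17E)$ with $a,b,c\ge0$ (which your $\alpha$-computation provides), observe that $m>1$ forces $a>1$ or $b>0$ or $c>0$, use that a nef class $N$ with $N\cdot C\ge2$ on this surface has base-point-free $|N|$ (citing \cite{refH1}) to handle $aC+b(17L-6E)$, and use that $48L-17E=D_1+\cdots+D_8$ with $D_i=2C-E_i$ a $(-1)$-curve meeting $C$ away from $p$ (which again requires showing $\OO_C(C-E_i)$ is nontrivial). Without this, or some equally concrete substitute, the ``only if'' half of the statement is unproved.
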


\begin{proof}
Let $C'$ be a general cubic through the 8 general points $p_i$.
Thus we may assume that $C'$ is smooth and the class of its proper transform $C$
is $-K_X$. Since $C^2=1\ge0$, we see that $-K_X$ is nef.

Now $\alpha=\alpha(I^{(m)})=\lceil\frac{48m}{17}\rceil$: let $F=(\lceil\frac{48m}{17}\rceil)L-mE$,
where $E=E_1+\cdots+E_8$. Then $F$ is a non-negative integer linear combination of $-K_X$, 
$17L-6E$ and $48L-17E$, and these all (by counting constants) are 
effective. Thus $\alpha\le\lceil\frac{48m}{17}\rceil$.
But $17L-6E$ is nef (since it reduces by quadratic Cremona transformations to $L$ \cite{refH6}), 
so $(\alpha L-mE)\cdot(17L-6E)\ge0$, hence $\alpha\ge \lceil\frac{48m}{17}\rceil$.

Next, $\alpha\ge 4r$ if and only if $\frac{48m}{17}>4r-1$ if and only if 
$48m>68r-17$ if and only if $48m\ge68r-16$ if and only if $m\ge\frac{17}{12}r-\frac{1}{3}$. 
Note also that ${\rm reg}(I)=4$.
Thus, if $m\ge\frac{17}{12}r-\frac{1}{3}$, then $\alpha(I^{(m)})\ge 4r=r\,{\rm reg}(I)$ so
(by \cite[Lemma 2.3.4]{refBH}) $I^{(m)}\subseteq I^r$. Also, if $m=r=1$, then
$I^{(m)}=I=I^r$.

Conversely, assume $m<\frac{17}{12}r-\frac{1}{3}$, hence $\alpha(I^{(m)})< 4r$. If $m=1$ and $r>1$,
then $\alpha(I^{(m)})=3<3r=\alpha(I^r)$, so $I^{(m)}\not\subseteq I^r$. So assume $m>1$ and $t=\alpha(I^{(m)})$; 
we will show that $I^r_{\ t}$ has a base point which $I^{(m)}_{\ t}$ doesn't have. It thus follows
that $I^{(m)}\not\subseteq I^r$.

If $t=\alpha(I^{(m)})<r\alpha(I)=3r$, this is clear, since $I^{(m)}_{\ t}\ne0$ but $I^r_{\ t}=0$. So assume
$r\alpha(I)\le t=\alpha(I^{(m)})<4r$; i.e., assume $3r-1 < 48m/17 \le 4r-1$. Note that
$I^r_{\ t}=\sum_{t_1\le \cdots\le t_r}\Pi_iI_{t_i}$, where the sum is over all sequences $t_i$
with $3\le t_1\le \cdots\le t_r$ such that $\sum_it_i=t$ (we ignore sequences with $t_1<3$ since $I_j=0$ for $j<3$). 
Since $t<4r$, we see $t_1=3$ for every sequence, hence $\Pi_iI_{t_i}$ and thus
$I^r_{\ t}$ has the same base points as does $I_3$, these being the nine base points $p_1,\cdots,p_9$
of the pencil of cubics $I_3$ through our $n=8$ general points $p_1,\cdots,p_8$. So now it suffices to show
that $p_9$ is not a base point of $I^{(m)}_{\ t}$. 

First we check that the points $p_1,\cdots,p_9$ are distinct. Note that $p_9$ can be identified with the base point
of $|C|$ (i.e., under the morphism $\pi:X\to\pr2$ blowing up the 8 points, if $p$ is the base point of $|C|$, then $\pi(p)=p_9$).
Let $q_i$ be the point where $E_i$ meets $C$; then $\pi(q_i)=p_i$. Thus to show that $p_i\ne p_9$ for $i<9$, 
it is enough to show that $q_i\ne p$; i.e., that the restriction of $C-E_i$ to $C$ is not trivial, which is the same as 
showing that $h^0(C, \OO_C(C-E_i))=0$, since a line bundle of degree 0 on an elliptic curve has positive $h^0$
if and only if the line bundle is trivial (in which case $h^0=1$). So suppose that $h^0(C, \OO_C(C-E_i))=1$.
Note $h^0(X,\OO_X(-E_i))=0$ (since $L-E_i$ is nef and $(L-E_i)\cdot (-E_i)<0$), $h^2(X, \OO_X(-E_i))=h^0(X,\OO_X(K_X+E_i))=0$
(by duality and the facts that $L\cdot(K_X+E_i)<0$ and that $L$ is nef) and $h^1(X, \OO_X(-E_i))=0$
(by Riemann-Roch since $h^0(X,\OO_X(-E_i))-h^1(X,\OO_X(-E_i))+h^2(X,\OO_X(-E_i))=((-E_i)^2-K_X\cdot(-E_i))/2+1 = 0$).
Taking cohomology of the exact sequence $0\to \OO_X(-E_i)\to \OO_X(C-E_i)\to \OO_C(C-E_i)\to 0$,
we now see that $h^0(X, \OO_X(C-E_i))=h^0(C, \OO_C(C-E_i))=1$. For $i=1$ (just to be specific; 
the argument for $1<i\le8$ is the same) we have $C-E_1=3L-2E_1-E_2-\cdots - E_8$. But 
$h^0(X, \OO_X(3L-2E_1))=\dim I(2p_1)_3=7$, and for a general point $p_{j+1}$
with $j\ge 1$, the dimension of $|3L-2E_1-E_2-\cdots - E_{j+1}|$ is one less than the dimension of 
$|3L-2E_1-E_2-\cdots - E_j|$ as long as $|3L-2E_1-E_2-\cdots - E_j|$ is not empty (since assigning 
a general base point to any nonempty linear system imposes exactly one condition). Thus
$h^0(X,\OO_X(C-E_i))$ is 0 for general points, contradicting $h^0(C, \OO_C(C-E_i))=1$. 

It is well known that the divisor class $[2C-E_i]=[6L-2(E_1+\cdots+e_8)-E_i]$ is the class of a smooth irreducible curve $D_i$
when the points $p_1,\cdots,p_8$ are general; indeed, in this situation, $[2C-E_i]$ reduces by quadratic Cremona
transformations to $[E_1]$ (see \cite{refH6}, for example), hence $D_i$ is a smooth rational curve of self-intersection 
$-1$ which meets $C$ at a single point $q_i'$, since $C\cdot D_i=1$. As before, $p\ne q_i'$. (If $p=q_i'$, then
$\OO_C(D_i-C)$, which is equal to $\OO_C(C-E_i)$, would be trivial, and we have seen it is not.)

Under the identification of $I^{(m)}_{\ t}$ with $H^0(X,F)$ for $F=tL-mE$, if $|F|$
has a base point $p'$, then $p''=\pi(p')$ is a base point of $I^{(m)}_{\ t}$. Not every base point of
$I^{(m)}_{\ t}$ comes from a base point of $|F|$, since $p_1,\ldots,p_8$ are always base points
of $I^{(m)}_{\ t}$ even though $|F|$ can sometimes be base point free (after all, one of the motivations historically 
for blowing up points was to remove base points). However, if $I^{(m)}_{\ t}$ has a base
point $p''$ away from the points $p_1,\ldots,p_8$ blown up by $\pi$, then $|F|$ has a base point $p'$ with $\pi(p')=p''$.
Thus, since the points $p_1,\cdots,p_9$ are distinct, $p_9$ is a base point of $I^{(m)}_{\ t}$ if and only if
$p$ is a base point of $|F|$. But, as we saw above,
$F=aC+b(17L-6E)+c(48L-17E)$ for some non-negative integers $a, b$ and $c$. Since we are assuming that
$m>1$, either $a>1$, or $b$ or $c$ is positive. If $b>0$, then $aC+b(17L-6E)$ is nef with $(aC+b(17L-6E))\cdot C>1$,
so $|aC+b(17L-6E)|$ is base point free by \cite{refH1}. Also $48L-17E=D_1+\cdots+D_8$, so $p$ is not a base point
of $|48L-17E|$ and hence not of $|F|$ when $b>0$ (since then
we can write $F$ as a sum of effective divisors, none of which pass through $p$).
So suppose $b=0$. If $a>1$, then $|aC|$ is nef with $aC\cdot C>1$ hence 
base point free by \cite{refH1}, so the same argument again shows
$p$ is not a base point of $|F|$. If $a=1$ but $c>0$, we reduce to the case
that $b>0$, since $F=(C+(48L-17E))+(c-1)(48L-17E)=3(17L-6E)+(c-1)(48L-17E)$. 
\end{proof}

In stating the next result one needs to be careful.
The issue is that the condition of generality on the points which guarantees
that the result holds depends on $m$. The reason is that for 9 points,
$h^0(X,-mK_X)=1$ holds for general points, but the open condition for which
this holds becomes smaller as $m$ increases.

\begin{prop} Fix positive integers $m$ and $r$.
Then $I^{(m)}\subseteq I^r$ where $I$ is the ideal of $n=9$ general points of $\pr2$
if and only if $m\ge \frac{4}{3}r-\frac{1}{3}$. For $n=9$ generic points we thus have $\rho(I)=\frac{4}{3}$.
\end{prop}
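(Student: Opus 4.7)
The plan is to follow the template set by the $n=7,8$ proofs. Let $X$ be the blowup of $\pr 2$ at the nine points, put $E=E_1+\cdots+E_9$, and let $C$ be the proper transform of the unique cubic through the points, so $[C]=-K_X=3L-E$ and $C^2=0$. For the fixed value of $m$ in the statement I will assume the nine points are general enough that $\OO_C(C|_C)\in \mathrm{Pic}^0(C)$ has order exceeding $m$, which forces $h^0(X,-jK_X)=1$ for all $1\le j\le m$; this is precisely the ``generality depends on $m$'' condition flagged by the authors. First I would record the invariants $\alpha(I^{(m)})=3m$ (from $f^m\in I^{(m)}_{3m}$ and the nef intersection $(\alpha L-mE)\cdot(-K_X)=3\alpha-9m\ge 0$) and ${\rm reg}(I)=4$ (from the Hilbert function of $9$ general points). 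The inequality $m\ge \tfrac{4}{3}r-\tfrac{1}{3}$ then unpacks to $\alpha(I^{(m)})=3m\ge 4r-1=r\,{\rm reg}(I)-1$.

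For sufficiency, when $3m\ge 4r$ the inequality $\alpha(I^{(m)})\ge r\,{\rm reg}(I)$ gives $I^{(m)}\subseteq I^r$ by \cite[Lemma 2.3.4]{refBH}. The boundary case $3m=4r-1$ needs a degree-by-degree argument: $I^{(m)}_t=0$ for $t<3m$; in degree $t=3m$ the generality makes $I^{(m)}_{3m}$ the line spanned by $f^m$, and since $3m\ge 3r$ forces $m\ge r$, the factorization $f^m=f^{r-1}\cdot f^{m-r+1}$ (each factor in $I$) shows $f^m\in I^r$; for $t\ge 4r=r\,{\rm reg}(I)$ the inclusion $I^{(m)}_t\subseteq I^{(r)}_t=I^r_t$ follows from $m\ge r$ together with \cite[Lemma 2.3.3(c)]{refBH}.

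The substance lies in the necessity direction. Assuming $3m\le 4r-2$, I would produce a non-containment in degree $t=3m+1$. Because $3m+1<4r$ and $\alpha(I)=3$, every product $g_1\cdots g_r\in I^r_{3m+1}$ has at least one factor of degree exactly $3$; since $\dim I_3=1$ (spanned by $f$), that factor is a scalar multiple of $f$ and so $I^r_{3m+1}\subseteq (f)$. On the other hand, Riemann--Roch applied to $D=L-mK_X$ (with $D^2=1+6m$, $D\cdot K_X=-3$, and $D$ nef and big) combined with Ramanujam vanishing \cite{refT} and Serre duality yields $\dim I^{(m)}_{3m+1}=3m+3$, and the same computation for $L-(m-1)K_X$ gives $\dim\bigl(f\cdot I^{(m-1)}_{3m-2}\bigr)=3m$; thus $I^{(m)}_{3m+1}$ has a three-dimensional quotient escaping $(f)$ and therefore escaping $I^r_{3m+1}$. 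The value $\rho(I)=\tfrac{4}{3}$ then follows by the scaling argument at the end of Theorem \ref{rhoptsonsmoothconic}(b). The main obstacle will be handling the generality assumption cleanly, since $h^0(X,-mK_X)=1$ is an open condition that shrinks with $m$; this is why the proposition must fix $m$ and $r$ before selecting the points, and why some care is needed at the boundary degree $t=4r-1$ where the uniqueness of $f^m$ up to scalar in $I^{(m)}_{3m}$ is really being used.
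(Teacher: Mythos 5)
Your proof is correct and follows essentially the same route as the paper's: both compute $\alpha(I^{(m)})=3m$ and ${\rm reg}(I)=4$, split into the cases $3m\ge 4r$, $3m=4r-1$ (handled degree by degree using $h^0(X,-mK_X)=1$ and \cite[Lemma 2.3.3(c)]{refBH}), and $3m\le 4r-2$, and in the last case obstruct containment in a degree $t$ with $3m<t<4r$ where the cubic is forced to be a common factor of $I^r_{\ t}$ but not of $I^{(m)}_{\ t}$. The only real difference is that where the paper cites \cite{refH6} for fixed-component-freeness of $I^{(m)}_{\ t}$ for $t\ge 3m+1$, you establish the weaker but sufficient fact that $f$ does not divide all of $I^{(m)}_{\ 3m+1}$ by a direct Riemann--Roch plus Ramanujam-vanishing count ($\dim I^{(m)}_{\ 3m+1}=3m+3$ versus $\dim f\cdot I^{(m-1)}_{\ 3m-2}=3m$), which is self-contained and equally valid.
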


\begin{proof}
Let $F=\alpha L-mE$ for $E=E_1+\cdots+E_9$ and $\alpha=\alpha(I^{(m)})$. 
Let $C'$ be a general cubic curve through $p_1,\ldots,p_9$
and let $C$ be its proper transform. Then $C$ is smooth and irreducible with $C^2=0$, hence nef.
Thus $F\cdot C\ge0$ implies $\alpha\ge 3m$, but $3mL-mE=mC$, so $\alpha\le 3m$.
Thus $\alpha(I^{(m)})=3m$ but, for $t\ge 3m$, $I^{(m)}_{\ t}$ is fixed component
free (by \cite{refH6}) if and only if $t\ge 3m+1$, whereas for $t\ge \alpha(I^r)=3r$, 
$I^r_{\ t}$ is fixed component free if and only if there is a product $I_{t_1}\cdots I_{t_r}$ for
a sequence $t_1\le \cdots\le t_r$ with $\sum_it_i=t$ with $t_1>3$;
i.e., if and only if  $t\ge 4m$. Since $L-K_X$ is normally generated \cite{refH1} 
and $|iL-K_X|$ is fixed component free \cite[Proposition 3.2.1.1(b)]{refH2}
and $H^0(X, L-K_X)\otimes H^0(X, iL)\to H^0(X, (i+1)L-K_X)$ is onto \cite[Theorem 3.2.1.2(b)]{refH2}
for $i>0$, we have $I_{t_1}\cdots I_{t_r}=I^{(r)}_{\ t}$ for all sequences $4\le t_1\le \cdots\le t_r$
with $\sum_it_i=t\ge 4r$ and hence $I^r_{\ t}=I^{(r)}_{\ t}$. 
Thus $\alpha(I^{(m)})\le4r-2$ implies $I^{(m)}\not\subseteq I^r$ (since there is a $t$
with $\alpha(I^{(m)})=3m< t<4r$, so $I^{(m)}_{\ t}$ has no fixed components but $I^r_{\ t}$ does, 
hence $I^{(m)}_{\ t}\not\subseteq I^r_{\ t}$). If $\alpha(I^{(m)})\ge4r$, then 
$I^{(m)}\subseteq I^{(r)}$ by \cite[Lemma 2.3.4]{refBH} since ${\rm reg}(I)=4$. Finally,
if $3m=\alpha(I^{(m)})=4r-1$, then $I^{(m)}\subseteq I^r$, since
$I^{(m)}_{\ t}\subseteq I^r_{\ t}$ for all $t\ge0$. To see this, note first that $3m=4r-1$ implies
$m\ge r$, so $I^{(m)}\subseteq I^{(r)}$. Also note that $I^{(r)}_{\ t}=I^r_{\ t}$ for
$t\ge 4r$ by \cite[Lemma 2.3.3(c)]{refBH} and that $I^{(m)}_{\ 3m}=I^{m}_{\ 3m}$,
since $h^0(X, -mK_X)=1$, so $|-mK_X|=mC$. Now we see that $I^{(m)}_{\ t}\subseteq I^r_{\ t}$
for $t<\alpha(I^{(m)})$ (since $I^{(m)}_{\ t}=0$), 
for $t=\alpha(I^{(m)})$ (since $I^{(m)}_{\ 3m}=I^{m}_{\ 3m}$ and $m\ge r$ so $I^{m}_{\ 3m}\subseteq I^r_{\ 4r-1}$);
and for $t>\alpha(I^{(m)})$ (since $I^{(m)}_{\ t}\subseteq I^{(r)}_{\ t}=I^r_{\ t}$).
Thus $I^{(m)}\subseteq I^r$ if and only if $3m=\alpha(I^{(m)})\ge 4r-1$, if and only if $m\ge \frac{4}{3}r-\frac{1}{3}$. 
\end{proof}

\section{Examples and Questions}\label{quests}

It is an interesting problem to determine for which ideals we have $I^{(r)}=I^r$ for all $r\ge1$.
It follows by Macaulay's Umixedness Theorem that $I^{(r)}=I^r$ holds for complete intersections
(see the proof of Theorem 32 (2), p. 110 of \cite{refM}).
More recently, characterizations of ideals for which symbolic and ordinary powers coincide
have been given by \cite{refHo} (for prime ideals) and by \cite{refLS} 
(for radical ideals). Such ideals have been studied also in \cite{refMNV}, \cite{refMo} and \cite{refHH}.
In the following example we give constructions of ideals for which
$I^{(r)}=I^r$ for all $r\ge 1$ which seem to be new.

\begin{Ex} Consider a fat point subscheme coming from the class
of a smooth rational curve of self-intersection 0 on the blow up $X$
of $\pr 2$ at general points $p_1,\cdots,p_n\in\pr 2$. 
There are many additional examples which can be obtained from the
one below by using the action of the Cremona group. For specificity,
let $d$ be a positive integer bigger than 2, let $n=2d$,
and let $p_1,\cdots,p_n\in\pr 2$ be general points. Let $Z=(d-1)p_1+p_2+\cdots+p_n$.
Then $F=dL-(d-1)E_1-E_2-\cdots-E_n$ is (linearly equivalent to) an effective divisor
by Riemann-Roch; the general member of $|F|$ is a smooth rational curve, the proper transform 
$D$ in fact of an irreducible degree $d$ curve $C'$ with a singularity of
multiplicity $d-1$ (the curve $C'$ can be obtained by applying quadratic Cremona
transformations to a line in $\pr 2$). It follows that $\alpha(I(Z))\le d$ 
but $F$ is nef with $F^2=0$ so $\alpha(I(Z))$ cannot be less than $d$
(since $\alpha(I(Z))<d$ implies $F-L$ is effective, but
$F\cdot (F-L)=-d<0$, which is impossible since $F$ is nef). 

Consider $0\to \mathcal{O}_X((m-1)F)\to \mathcal{O}_X(mF)\to \mathcal{O}_D(mF)\to 0$.
The restriction $(mF)\vert_D$ is trivial, since $D$ is smooth and rational
and $F^2=0$. 
By induction on $m$, taking cohomology of
$0\to \mathcal{O}_X((m-1)F)\to \mathcal{O}_X(mF)\to \mathcal{O}_D\to 0$
and using the fact that $h^1(X, \mathcal{O}_X)=0$, we see
that $h^1(X, \mathcal{O}_X(mF))=0$ for all $m\ge0$. 
It now follows by induction that $h^0(X, \mathcal{O}_X(mF))=m+1$. A similar
argument applied to 
$0\to \mathcal{O}_X((m-1)F+L)\to \mathcal{O}_X(mF+L)\to \mathcal{O}_D(L)\to 0$
gives $h^1(X, \mathcal{O}_X(mF+L))=0$
and $h^0(X, \mathcal{O}_X(mF+L))=m(d+1)+3$.

Consider the following diagram, where $V_L=H^0(X, L)$ and 
the vertical maps are the canonical multiplication maps:
$$\begin{matrix}
0 & \to & H^0(X, (m-1)F)\otimes V_L  & \to & H^0(X,mF)\otimes V_L &
\to & H^0(D, (mF)\vert_D)\otimes V_L & \to & 0 \cr
{} & {} & \downarrow \raise3pt\hbox to0in{$\scriptstyle\mu_1$\hss} & {}
& \downarrow\raise3pt\hbox to0in{$\scriptstyle\mu_2$\hss} & {}
& \downarrow\raise3pt\hbox to0in{$\scriptstyle\mu _3$\hss} &
{} & {} \cr
0 & \to & H^0(X,(m-1)F+L) & \to & H^0(X,mF+L) & \to &
H^0(D,(mF+L)\vert_D)& \to & 0 \cr
\end{matrix}$$

By induction on $m$ using the snake lemma and the fact that $d>2$, 
the maps $\mu_1, \mu_2$ and $\mu_3$ are injective, 
hence we have exact sequences
$0\to {\rm cok}(\mu_1)\to {\rm cok}(\mu_2)\to {\rm cok}(\mu_3)\to 0$
and $0\to {\rm Im}(\mu_1)\to {\rm Im}(\mu_2)\to {\rm Im}(\mu_3)\to 0$.
Since $h^0(X, \mathcal{O}_X(F))=2$, there is a section $C\in |F|$ disjoint
from $D$. Taking $m=1$, let $C_1,\ldots,C_{d-2}$ be sections of $|F+L|$
which span a subspace complementary to the image of $\mu_2$.
Let $L_1, L_2$ and $L_3$ be a basis for $V_L$.
By induction we find for every $m$, that 
the image of $\mu_2$ is spanned by $iC+jD+L_k$
for $0\le i,j\le m$, $i+j=m$ and $1\le k\le 3$, and
$H^0(X,mF+L)$ is spanned by these together with
$i'C+j'D+C_l$, $0\le i',j'\le m-1$, $i'+j'=m-1$, $1\le l\le d-2$. 
(For the induction, note that the basis for $H^0(X,(m+1)F+L)$
comes partly from the basis for $H^0(X,mF+L)$, by adding
$D$ to the basis elements of $H^0(X,mF+L)$, and partly
from $H^0(C,((m+1)F+L)\vert_D)\cong H^0(C,L\vert_D)$,
where the isomorphism takes the restriction of $mC+C_i$
to that of $C_i$.)

Expressing the foregoing in terms of ideals,
we have that the regularity of $I(mZ)$ is at most $m\alpha+1$
and thus that $I(mZ)$ is generated in degrees $m\alpha$
and $m\alpha+1$ and, as long as $d>2$, both degrees are needed
(where we denote $\alpha(I(Z))$ simply by $\alpha$).
Elements $A$ and $B$, corresponding to $C$ and $D$ above, span $I(Z)_\alpha$, and 
$C'_1,\ldots,C'_{d-2}$, corresponding to the $C_i$ above, 
span a subspace of $I(Z)_{\alpha+1}$ complementary 
to the image of $I(Z)_{\alpha}\otimes R_1\to I(Z)_{\alpha+1}$,
where $R=k[\pr 2]$ so $R_1$ denotes the vector space
span of the linear forms. Then
$A^iB^{m-i}$ for $0\le i\le m$ span $I(mZ)_\alpha$, and
$A^iB^{m-i-1}C'_j$ for $0\le i\le m-1$ and $1\le j\le d-2$
span a subspace of $I(mZ)_{m\alpha+1}$ complementary 
to the image of $I(Z)^m_{\ m\alpha}\otimes R_1\to I(Z)_{m\alpha+1}$.
Thus the elements $A^iB^{m-i}$ and $A^iB^{m-i-1}C'_j$ generate the ideal
$I(mZ)$, but all of these elements are in $I(Z)^m$, hence
$I(Z)^m\subseteq I(mZ)\subseteq I(Z)^m$; i.e., $I(Z)^m=I(mZ)=I(Z)^{(m)}$,
and clearly there is no $m\ge1$ such that $I(mZ)$ is a power of any ideal which is
prime (since the support of $mZ$ consists of a finite set with more than one point), radical 
(since the points do not all have the same multiplicity) or a complete intersection
(since if $I(mZ)=J^r$ for some complete intersection $J$,
then $I(msZ)=J^{rs}$ for all $s\ge1$ and hence $J^{sr}$ is generated either in 
one degree or in a range of degrees that increases with $s$, but
$I(msZ)$ is generated in two degrees for all $s$). 
\end{Ex}

If $I$ is homogeneous with $0\ne I\subsetneq k[\pr N]$ and 
$I^{(r)}=I^r$ for all $r\ge 1$, then $\rho(I)=1$.
However, we do not know any examples with $\rho(I)=1$ 
but for which $I^{(r)}=I^r$ fails for some $r$. This raises the 
following question:

\begin{Ques} Let $0\ne I\subsetneq k[\pr N]$ be a homogeneous ideal.
Does $\rho(I)=1$ imply $I^{(r)}=I^r$ for all $r\ge 1$?
\end{Ques}

\end{document}